\definecolor{verylight}{gray}{0.97}
\definecolor{light}{gray}{0.9}
\definecolor{medium}{gray}{0.85}
\definecolor{dark}{gray}{0.6}
\def\NZQ{\Bbb}               
\def\ZZ{{\NZQ Z}}
\def\frk{\frak}               
\def\Phi{{\frk n}}
\def\Phi{{\frk N}}
\def\MC{{\mathcal C}}
\def\opn#1#2{\def#1{\operatorname{#2}}} 
\opn\chara{char} \opn\length{\ell} \opn\pd{pd} \opn\rk{rk}
\opn\projdim{proj\,dim} \opn\injdim{inj\,dim} \opn\rank{rank}
\opn\depth{depth} \opn\grade{grade} \opn\height{height}  \opn\bigheight{bigheight}
\opn\embdim{emb\,dim} \opn\codim{codim}
\opn\Tr{Tr} \opn\bigrank{big\,rank}
\opn\superheight{superheight}\opn\lcm{lcm}
\opn\trdeg{tr\,deg}
\opn\reg{reg} \opn\lreg{lreg} \opn\ini{in} \opn\lpd{lpd}
\opn\size{size}\opn\bigsize{bigsize}
\opn\cosize{cosize}\opn\bigcosize{bigcosize}
\opn\sdepth{sdepth}\opn\sreg{sreg}
\opn\link{link}\opn\fdepth{fdepth}
\opn\cdeg{cdeg}
\opn\div{div} \opn\Div{Div} \opn\cl{cl} \opn\Cl{Cl}
\let\epsilon\varepsilon
\let\phi=\varphi
\let\kappa=\varkappa
\opn\Spec{Spec} \opn\Supp{Supp} \opn\supp{supp} \opn\Sing{Sing}
\opn\Ass{Ass} \opn\Min{Min}\opn\Mon{Mon} \opn\dstab{dstab} \opn\astab{astab}
\opn\Syz{Syz}
\opn\Ann{Ann} \opn\Rad{Rad} \opn\Soc{Soc}
\opn\Im{Im} \opn\Ker{Ker} \opn\Coker{Coker} \opn\Am{Am}
\opn\Hom{Hom} \opn\Tor{Tor} \opn\Ext{Ext} \opn\End{End}
\opn\Aut{Aut} \opn\id{id}
\opn\nat{nat}
\opn\pff{pf}
\opn\Pf{Pf} \opn\GL{GL} \opn\SL{SL} \opn\mod{mod} \opn\ord{ord}
\opn\Gin{Gin} \opn\Hilb{Hilb}\opn\sort{sort}
\opn\initial{init}
\opn\ende{end}
\opn\height{height}
\opn\type{type}
\opn\set{set}
\opn\indeg{indeg}
\opn\aff{aff} \opn\con{conv} \opn\relint{relint} \opn\st{st}
\opn\lk{lk} \opn\cn{cn} \opn\core{core} \opn\vol{vol}
\opn\link{link} \opn\star{star}\opn\lex{lex}\opn\cochord{co-chord}\opn\im{im}
\opn\gr{gr}
\def\pot#1#2{#1[\kern-0.28ex[#2]\kern-0.28ex]}
\opn\dirlim{\underrightarrow{\lim}}
\opn\inivlim{\underleftarrow{\lim}}
\let\to=\rightarrow
\def\Implies{\ifmmode\Longrightarrow \else
        \unskip${}\Longrightarrow{}$\ignorespaces\fi}
\def\implies{\ifmmode\Rightarrow \else
        \unskip${}\Rightarrow{}$\ignorespaces\fi}
\def\iff{\ifmmode\Longleftrightarrow \else
        \unskip${}\Longleftrightarrow{}$\ignorespaces\fi}
 \theoremstyle{plain}
\newtheorem{Theorem}{Theorem}[section]
 \newtheorem{Lemma}[Theorem]{Lemma}
 \newtheorem{Corollary}[Theorem]{Corollary}
 \newtheorem{Proposition}[Theorem]{Proposition}
 \theoremstyle{definition}
 \newtheorem{Definition}[Theorem]{Definition}
 \newtheorem{Remark}[Theorem]{Remark}
 \newtheorem{Examples}[Theorem]{Examples}
\let\epsilon\varepsilon
\let\kappa=\varkappa
\opn\dis{dis}
\def\pnt{{\raise0.5mm\hbox{\large\bf.}}}
\opn\Lex{Lex}
\begin{document}
\title{Sequentially Cohen-Macaulay binomial edge ideals of closed graphs}
\author {Viviana Ene, Giancarlo Rinaldo, Naoki Terai}

\dedicatory
{Dedicated to J\"urgen Herzog on the occasion of his $80^{th}$ birthday}

\address{Viviana Ene, Faculty of Mathematics and Computer Science, Ovidius University, Bd. Mamaia 124, 900527 Constanta, Romania}  \email{vivian@univ-ovidius.ro}

\address{Giancarlo Rinaldo,  Department of Mathematics, Informatics, Physics and Earth
science, University of Messina, Viale F. Stagno d'Alcontres, 31, 98166 Messina,
Italy}  \email{giancarlo.rinaldo@unime.it}

\address{Naoki Terai, Department of Mathematics, Okayama University, 3-1-1, Tsushima-naka, Kita-ku, Okayama, 700-8530, Japan}  \email{terai@okayama-u.ac.jp}

\thanks{The third author was supported by the JSPS Grant-in Aid for Scientific Research (C)  18K03244. }

\begin{abstract}
In this paper we provide a full
combinatorial characterization of sequentially Cohen-Macaulay binomial edge ideals of
closed graphs. In addition, we show that a binomial edge ideal of a closed graph is approximately Cohen-Macaulay if and only if it is almost Cohen-Macaulay. 
\end{abstract}

\subjclass[2010]{13H10,05E40,13C70}
\keywords{Binomial edge ideals, closed graphs, sequentially Cohen-Macaulay ideals, approximately Cohen-Macaulay ideals}

\maketitle

\section*{Introduction}
\label{S:Intro}

Binomial edge ideals arising from finite simple graphs were introduced in 2010 independently in two papers, \cite{HHHKR} and 
\cite{Ohtani}. Since then, they have become a topic of intensive research in combinatorial commutative algebra. Apart of the applications of these ideals in algebraic statistics, there is a great interest in their investigation since it turned out that fundamental algebraic and homological properties of these ideals  are strongly related to the combinatorial data of the underlying graphs. 

Let $G$ be a simple graph, that is, undirected and without loops or multiple edges, on the vertex set $[n]$. Let 
$S=K[x_1,x_2,\ldots,x_n,y_1,y_2,\ldots,y_n]$ be the polynomial ring in $2n$ variables over a field $K.$ For $1\leq i<j\leq n,$ we set 
$f_{ij}=x_iy_j-x_jy_i.$ The binomial edge ideal $J_G$ of the graph $G$ is generated by all the binomials $f_{ij}$ where $i<j$ and $\{i,j\}$ an edge in 
$G.$ In particular, for the complete graph on $n$ vertices, $J_G$ is the determinantal ideal generated by all the maximal minors of the $2\times n$-generic matrix $X$ with the rows $x_1,x_2,\ldots,x_n$ and $y_1,y_2,\ldots,y_n.$

Basic facts about  binomial edge ideals and their minimal free resolutions are presented in \cite[Chapter 7]{HHO}. Cohen-Macaulay binomial edge ideals were investigated in several papers; see, for example, \cite{BMS, BMS2, EHH, KM, RaRi, Ri}. Gorenstein binomial edge ideals are classified in \cite{GM}. Licci binomial edge ideals are characterized in \cite{ERT}.  A Hochster type formula for the local cohomology modules of binomial edge ideals is obtained in \cite{AM}.

In this paper, we focus on the study of sequentially Cohen-Macaulay  binomial edge ideals. Some  particular classes of graphs with sequentially Cohen-Macaulay binomial edge ideals are studied in \cite{Za}. A complete classification of the graphs whose binomial edge ideals are sequentially Cohen-Macaulay seems to be very difficult. Even the full classification of graphs whose binomial edge ideals are Cohen-Macaulay is not yet known. However, a significant progress in this direction has been recently done in \cite{BMS2}. 

As in the case of the Cohen-Macaulay property, for studying sequentially Cohen-Macaulay binomial edge ideals, we can reduce to connected  indecomposable graphs; see  Remark~\ref{R:disconectSCM}. 

In this paper we focus on binomial edge ideals with quadratic Gr\"obner bases, that is, those ideals which are associated with closed graphs, introduced in 
\cite{HHHKR}. There are several combinatorial characterizations of closed graphs. We will especially use the one given in \cite[Theorem 2.2]{EHH} which states that a graph $G$ is closed if and only if the facets of the clique complex $\Delta(G)$ are intervals. In fact, one can order the facets of $\Delta(G)$, say 
$F_1,\ldots, F_r,$ such that $F_i=[a_i,b_i]:=\{m\in \ZZ: a_i\leq m\leq b_i\}$ for $1\leq i\leq r$ and $1=a_1<a_2<\cdots<a_r<b_r=n.$ By using this characterization, 
in Proposition~\ref{prop:cutsetsclosed} we specify all the minimal prime ideals of $J_G$ when $G$ is closed.  In Proposition~\ref{Prop:2cliques}, we show that all indecompo\-sable closed graphs with two maximal cliques are sequentially Cohen-Macaulay. The main result of this article  is Theorem~\ref{Th:SCMClosed} which states that if $G$ is an indecomposable closed graph with the clique complex $\Delta(G)=\langle F_1,F_2,\ldots,F_{r+1}\rangle, r\geq 2,$
where $F_i=[a_{i-1},b_i]$ for $1\leq i\leq r+1$ and $1=a_0<a_1<\ldots<a_r,$ $b_1<\ldots<b_{r+1}=n,$
then $J_G$ is sequentially Cohen-Macaulay if and only if there exists $1\leq k\leq r$ such that $a_i=a_r-(r-i)$ for $k \leq i \leq r$ and $b_j=b_1+(j-1)$ for $1\leq j \leq k.$ The most part of Section~\ref{S:closed} is devoted to the proof of this theorem.  A result of Goodarzi \cite{Goo} providing a characterization of
homogeneous sequentially Cohen-Macaulay ideals
 has a crucial role in the proof. For the necessity of the numerical conditions in 
Theorem~\ref{Th:SCMClosed}, we use a localization argument together with Goodarzi's condition. For the sufficiency, which is the most technical part of the proof, we proceed by induction on the number of the maximal cliques of $G$ in order to verify  Goodarzi's condition for $J_G$.
The  behavior of associated primes of a homogeneous ideal under hyperplane sections, which follows from \cite[Section 3]{MVY}, plays a key role in our arguments. 

Well known examples of sequentially Cohen-Macaulay modules are approximately Cohen-Macaulay modules. In Section~\ref{S:Approx}, we give the classification of closed graphs whose binomial edge ideals are approximately Cohen-Macaulay. We show in 
Theorem~\ref{thm:approxCMclosed} that, if $G$ is closed,  $J_G$ is is approximately Cohen-Macaulay if and only if $J_G$ is almost Cohen-Macaulay.
Corollary~\ref{cor:almostCMclosed} classifies the closed graphs $G$ with the property that $J_G$ is almost Cohen-Macaulay.

\section{Preliminaries}
\label{S:Prelim}

\subsection{Binomial edge ideals and closed graphs}
\label{SubS:BEI} 
All graphs considered in this article are \emph{simple}, i.e. undirected without loops or multiple edges.
Let $G$ be a simple graph on the vertex set $[n]:=\{1,2,\ldots,n\}$  with the edge set 
$E(G).$  Let $K$ be a field and $S=K[x_1,x_2,\ldots,x_n,y_1,y_2,\ldots,y_n]$ the polynomial $K$-algebra in $2n$ variables. The \emph{binomial edge ideal} $J_G$ is generated by all the binomials $f_{ij}:=x_iy_j-x_jy_i$ with $i<j$ and $\{i,j\}\in E(G).$ We consider the ring $S$ endowed with the lexicographic order induced by $x_1>x_2>\cdots >x_n>y_1>y_2>\cdots >y_n.$ By \cite[Corollary~2.2]{HHHKR}, $J_G$ is a radical ideal. Therefore, $J_G=\bigcap_{P\in\Min(J_G)}P,$ where $\Min(J_G)$ denotes the set of the minimal prime ideals of $J_G$.  
In \cite[Section 3]{HHHKR}, the elements of $\Min(J_G)$ are characterized in terms of the combinatorics of the graph $G.$ Let $W\subset [n]$ be a (possibly empty) subset of $[n]$, and let $G_1,\ldots,G_{c(W)}$ be the connected components of $G\setminus W$ where 
$G\setminus W$ is the induced subgraph of $G$ on the vertex set $[n]\setminus W.$ For $1\leq i\leq c(W),$ let $\tilde{G}_i$ be the complete 
graph on the vertex set $V(G_i).$ Let \[P_{W}(G)=(x_i,y_i: i\in W) +J_{\tilde{G_1}}+\cdots +J_{\tilde{G}_{c(W)}}.\]
Then $P_{W}(G)$ is a prime ideal of $\dim S/P_W(G)=n-|W|+c(W),$ for every $W\subset [n]$ \cite[Lemma~3.1]{HHHKR}.

By \cite[Theorem 3.2]{HHHKR}, $J_G=\bigcap_{W\subset [n]}P_{W}(G).$ Therefore, the minimal primes of $J_G$ are among the prime ideals $P_{W}(G)$ with $W\subset [n].$ The next known result provides a
precise classification of minimal primes of binomial edge ideals.

\begin{Proposition}\label{cpset}\cite[Corollary 3.9]{HHHKR}
$P_{W}(G)$ is a minimal prime of $J_G$ if and only if either $W=\emptyset$ or $W$ is non-empty and, for each 
$i\in W,$ $c(W\setminus\{i\})<c(W)$.
\end{Proposition}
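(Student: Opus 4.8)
The plan is to exploit the two facts already established in this section: that every minimal prime of $J_G$ occurs among the ideals $P_W(G)$ (since $J_G=\bigcap_{W\subseteq[n]}P_W(G)$), and that $\dim S/P_W(G)=n-|W|+c(W)$. Consequently $P_W(G)$ is a minimal prime of $J_G$ if and only if it is a minimal element, under inclusion, of the family $\{P_{W'}(G):W'\subseteq[n]\}$, and the whole argument reduces to deciding when one such prime is contained in another. I would first record the combinatorial meaning of the numerical condition: viewing $G\setminus(W\setminus\{i\})$ as $G\setminus W$ with the vertex $i$ adjoined, adding $i$ back either creates a new isolated component (if $i$ has no neighbour outside $W$), leaves the component count unchanged (if $i$ has neighbours in exactly one component of $G\setminus W$), or merges the components it meets (if it meets at least two). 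Thus $c(W\setminus\{i\})<c(W)$ holds precisely when $i$ is adjacent to at least two distinct components of $G\setminus W$, while $c(W\setminus\{i\})\ge c(W)$ holds precisely when $i$ meets at most one such component.

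Next I would settle the easy direction. Suppose some $i\in W$ satisfies $c(W\setminus\{i\})\ge c(W)$, so $i$ meets at most one component of $G\setminus W$; I claim $P_{W\setminus\{i\}}(G)\subseteq P_W(G)$. The linear generators $x_j,y_j$ with $j\in W\setminus\{i\}$ clearly lie in $P_W(G)$. For the binomial generators coming from the cliques of $G\setminus(W\setminus\{i\})$, any $f_{kl}$ with $k,l\ne i$ involves two vertices of a single component of $G\setminus W$ and hence lies in the corresponding $J_{\tilde{G}_c}\subseteq P_W(G)$, while any generator $f_{ik}$ involving $i$ lies in $(x_i,y_i)\subseteq P_W(G)$. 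Since $\dim S/P_{W\setminus\{i\}}(G)=n-|W|+1+c(W\setminus\{i\})>n-|W|+c(W)=\dim S/P_W(G)$, the inclusion is strict, so $P_W(G)$ is not minimal. This also disposes of the trivial point that $P_\emptyset(G)$, being of maximal dimension, is always a minimal prime.

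For the converse I would argue by contraposition, and here lies the one genuinely technical step. Assume $P_W(G)$ is not minimal, say $P_{W'}(G)\subsetneq P_W(G)$. Applying the retraction $S\to K[x_m,y_m:m\notin W]$ that kills the variables indexed by $W$ shows that the only variables lying in $P_W(G)$ are the $x_i,y_i$ with $i\in W$; comparing linear generators then forces $W'\subseteq W$, and properness gives $W'\subsetneq W$. Choose $i\in W\setminus W'$. If $i$ met at least two components $G_c,G_d$ of $G\setminus W$, pick neighbours $k\in V(G_c)$ and $l\in V(G_d)$ of $i$; since $i\notin W'$, the vertices $k,i,l$ lie in one component of $G\setminus W'$, whence $f_{kl}\in P_{W'}(G)$. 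The crux is that $f_{kl}\notin P_W(G)$: under the same retraction $P_W(G)$ maps to the binomial edge ideal $J$ of the disjoint union of the cliques $\tilde{G}_c$, and a $2\times2$ minor joining two different cliques is nonzero at a generic point of the corresponding variety (equivalently, with respect to the lexicographic order its leading term does not lie in $\ini(J)$, which is generated by the monomials $x_ay_b$ with $a<b$ in a common clique). This contradicts $P_{W'}(G)\subseteq P_W(G)$, so $i$ meets at most one component of $G\setminus W$ and $c(W\setminus\{i\})\ge c(W)$; that is, $W$ violates the stated condition.

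The main obstacle throughout is exactly this last verification that $f_{kl}\notin P_W(G)$. Everything else is bookkeeping with the explicit generators of $P_W(G)$ and the dimension formula, whereas ruling out a cross-clique minor requires genuinely using the geometry of the determinantal primes $J_{\tilde{G}_c}$; the generic-point argument, or equivalently the initial-ideal computation, seems the cleanest way to carry it out.
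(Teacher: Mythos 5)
The paper itself does not prove this proposition: it is quoted verbatim from \cite[Corollary~3.9]{HHHKR}, so there is no internal argument to compare yours against. Judged on its own, your proof is correct and is essentially a reconstruction of the standard argument behind that reference: reduce minimality of $P_W(G)$ among the primes of $J_G$ to minimality under inclusion within the family $\{P_{W'}(G)\}$; translate $c(W\setminus\{i\})<c(W)$ into ``$i$ is adjacent to at least two components of $G\setminus W$''; show $P_{W\setminus\{i\}}(G)\subseteq P_W(G)$ (strictly, by the dimension count) when $i$ meets at most one component; and, conversely, given $P_{W'}(G)\subsetneq P_W(G)$, first force $W'\subsetneq W$ via the variables and then exhibit a cross-component minor $f_{kl}\in P_{W'}(G)\setminus P_W(G)$ whenever some $i\in W\setminus W'$ meets two components of $G\setminus W$. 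Both of your justifications for the crux $f_{kl}\notin P_W(G)$ are valid; the cleanest form of the ``generic point'' version is to evaluate at the $K$-rational point with $x_k=y_l=1$ and all other coordinates $0$, which annihilates every generator of $P_W(G)$ but sends $f_{kl}$ to $1$.

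One sentence is wrong, though it costs you nothing: $P_\emptyset(G)$ is \emph{not} in general of maximal dimension among the $P_W(G)$. For the star $K_{1,3}$ with centre $v$ one has $\dim S/P_\emptyset(G)=n+1=5$ while $\dim S/P_{\{v\}}(G)=n-1+3=6$, so ``being of maximal dimension'' is not a legitimate reason for the minimality of $P_\emptyset(G)$. The correct reason is already contained in your converse direction: no variable lies in $P_\emptyset(G)$, so no $P_{W'}(G)$ with $W'\neq\emptyset$ can be contained in it; equivalently, your contrapositive shows that non-minimality of $P_W(G)$ forces $W'\subsetneq W$ for some $W'$, hence $W\neq\emptyset$. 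Delete or repair that one remark and the proof stands.
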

A  non-empty set $W\subset [n]$ such that $P_{W}(G)$ is a minimal prime ideal of $J_G$ is called a \emph{cut set} of $G.$ Let 
$\MC(G)$ be the collection of all cut  sets of $G$ together with the empty set. With this notation, we get
\begin{equation}\label{eq:dimension}
\dim S/J_G=\max\{n+c(W)-|W|: W \in \MC(G)\},
\end{equation} where $\dim$ denotes the Krull dimension. In addition, it follows that  $J_G$ is unmixed, that is, all its minimal prime ideals have the same dimension, if and only if  $n+c=\dim S/P_{\emptyset}(G)=\dim S/P_W(G),$ for all $W \in \MC(G),$ which is equivalent to $c(W)=|W|+c$ for all $W \in \MC(G).$
 
In what follows, we are interested in binomial edge ideals of closed graphs, introduced in \cite{HHHKR}. They are also known in combinatorics as \emph{proper interval graphs}, see \cite{CR2}. By \cite[Proposition 1.2]{HHHKR} they are a subclass
of chordal graphs, those with no induced cycle of length greater than $3$. There are several
characterizations of closed graphs in literature. One of them involves the clique
(simplicial) complex $\Delta(G)$ of $G$, whose faces are the \emph{cliques} of $G$, i.e. complete induced subgraphs of $G.$

\begin{Theorem}\label{th:closedchar}\cite{EHH, HHHKR}
Let $G$ be a graph on the vertex set $[n].$ The following statements are equivalent:
\begin{itemize}
\item[\emph{(i)}] $G$ is a closed; 
\item[\emph{(ii)}] The generators of $J_G$ form a Gr\"obner basis with respect to the lexicographic order induced by $x_1>\cdots >x_n>y_1>\cdots>y_n;$
\item [\emph{(iii)}] The facets, say $F_1,\ldots, F_r,$ of the clique complex $\Delta(G)$ of $G$ are intervals of the form $F_i=[a_i,b_i]$ which can be ordered such that $1=a_1<\cdots < a_r<b_r=n.$
\end{itemize}
\end{Theorem}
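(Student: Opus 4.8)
The plan is to prove the two equivalences $(i)\Leftrightarrow(ii)$ and $(i)\Leftrightarrow(iii)$ separately; recall that $G$ is closed with respect to the given labeling exactly when, for all edges $\{i,j\},\{k,l\}$ with $i<j$ and $k<l$, one has $\{j,l\}\in E(G)$ whenever $i=k$ and $\{i,k\}\in E(G)$ whenever $j=l$. The implication $(iii)\Rightarrow(i)$ is immediate: if $\{i,j\},\{i,l\}\in E(G)$ with $i<j<l$, then $i$ and $l$ lie in a common facet $F=[a,b]$, whence $a\le i<j<l\le b$ forces $j,l\in F$ and so $\{j,l\}\in E(G)$; the case of a common larger vertex is symmetric. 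Thus the genuine combinatorial content lies in $(i)\Rightarrow(iii)$.

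For $(i)\Rightarrow(ii)$ I would apply Buchberger's criterion to the generators $f_{ij}=x_iy_j-x_jy_i$ (for $i<j$), whose leading term under the given order is $\ini(f_{ij})=x_iy_j$. Two leading terms $x_iy_j$ and $x_ky_l$ are coprime unless $i=k$ or $j=l$, and coprime leading terms yield an $S$-polynomial reducing to $0$ automatically. In the case $i=k$ with, say, $j<l$ a direct computation gives $S(f_{ij},f_{il})=y_lf_{ij}-y_jf_{il}=-y_if_{jl}$, and in the case $j=l$ with $i<k$ it gives $S(f_{ij},f_{kj})=x_kf_{ij}-x_if_{kj}=x_jf_{ik}$. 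Closedness guarantees that $\{j,l\}$ (respectively $\{i,k\}$) is again an edge, so $f_{jl}$ (respectively $f_{ik}$) is one of the generators; hence each such $S$-polynomial is a monomial multiple of a generator and reduces to $0$, and the generators form a (quadratic) Gr\"obner basis.

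The converse $(ii)\Rightarrow(i)$ I would prove contrapositively using the same computation. If $G$ is not closed there are edges, say $\{i,j\},\{i,l\}\in E(G)$ with $i<j<l$ but $\{j,l\}\notin E(G)$ (or the symmetric configuration). Then $S(f_{ij},f_{il})=-y_if_{jl}$ lies in $J_G$, and its leading monomial $x_jy_iy_l$ is divisible by no $\ini(f_{pq})=x_py_q$ with $\{p,q\}\in E(G)$: such a divisor would force $p=j$ and $q=l$, i.e. $\{j,l\}\in E(G)$, which fails. Hence this $S$-polynomial has nonzero normal form with leading monomial outside the initial ideal of the generators, so they are not a Gr\"obner basis.

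The crux, and the step I expect to be the main obstacle, is $(i)\Rightarrow(iii)$. The heart is the \emph{interval lemma}: in a closed graph, if $\{i,j\}\in E(G)$ with $i<j$, then the whole interval $[i,j]$ is a clique. Granting this, a facet $F$ with $\min F=a$ and $\max F=b$ contains the edge $\{a,b\}$, so $F\subseteq[a,b]$ while $[a,b]$ is a clique; maximality then forces $F=[a,b]$, an interval. Distinct facets are pairwise non-nested (two maximal cliques cannot contain one another), so ordering them by left endpoint makes both endpoint-sequences strictly increasing, and for a connected $G$ the extreme facets reach the vertices $1$ and $n$, giving the chain $1=a_1<\cdots<a_r<b_r=n$. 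The difficulty is the interval lemma itself: each closedness condition requires a \emph{pair} of edges sharing a common endpoint, so from the single edge $\{i,j\}$ one cannot immediately fill in $[i,j]$ — indeed this fails for disconnected graphs, as the lone edge $\{1,3\}$ shows. I would therefore first establish the betweenness (umbrella) property that $\{i,k\}\in E(G)$ with $i<m<k$ forces $\{i,m\},\{m,k\}\in E(G)$, arguing by induction on $k-i$ and exploiting connectivity to manufacture the auxiliary edges at the intermediate vertex $m$ that make the closedness conditions applicable; once betweenness is available, applying it to all pairs inside $[i,j]$ yields the lemma.
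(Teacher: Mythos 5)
First, a point of context: the paper does not prove Theorem~\ref{th:closedchar} at all; it is quoted from \cite{EHH, HHHKR}, so your proposal can only be measured against the arguments in those sources. Your treatment of (i)$\Leftrightarrow$(ii) is complete and correct, and is essentially the original Buchberger computation of \cite{HHHKR}: the identities $S(f_{ij},f_{il})=-y_if_{jl}$ and $S(f_{ij},f_{kj})=x_jf_{ik}$ are right, and the converse via the leading monomial $x_jy_iy_l$, which no $x_py_q$ with $p<q$ and $\{p,q\}\in E(G)$ can divide unless $\{j,l\}\in E(G)$, is sound. The implication (iii)$\Rightarrow$(i) is also correct.

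The genuine gap is exactly where you predicted it: the betweenness (umbrella) lemma underlying (i)$\Rightarrow$(iii) is left with only a one-line plan, and that plan does not close as stated. The obstruction is that a vertex $m$ strictly between the endpoints of an edge $\{i,k\}$ may have all of its neighbours far outside $[i,k]$; then no pair of edges shares an endpoint, no closedness condition applies, and an induction on $k-i$ has nothing to recurse on. The missing idea is this: in a graph closed with respect to the labeling, every shortest path $v_0,v_1,\dots,v_s$ is monotone in the labels, because if $v_{t-1}$ and $v_{t+1}$ lay on the same side of $v_t$, the two edges at $v_t$ would share their common smaller (or larger) endpoint and closedness would supply the chord $\{v_{t-1},v_{t+1}\}$, shortening the path. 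For connected $G$, a shortest path from $j$ to $j+1$ is therefore a strictly increasing integer sequence from $j$ to $j+1$, hence a single edge, so $\{j,j+1\}\in E(G)$ for every $j$. Once these consecutive edges are available your induction on $k-i$ does work: $\{k-1,k\}$ and $\{i,k\}$ share the larger endpoint $k$, giving $\{i,k-1\}\in E(G)$, while $\{i,i+1\}$ and $\{i,k\}$ share the smaller endpoint $i$, giving $\{i+1,k\}\in E(G)$, and the inductive hypothesis applied to these two shorter edges makes $[i,k]$ a clique. Separately, be aware that the theorem as printed carries no connectivity hypothesis, whereas your own example of the lone edge $\{1,3\}$ shows that (i)$\Rightarrow$(iii) fails for the given labeling on disconnected graphs; the source result \cite[Theorem 2.2]{EHH} assumes $G$ connected, and conditions (ii) and (iii) must in any case be read as holding after a suitable relabeling if (i) is taken to be labeling-independent.
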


In what follows, for a closed graph $G$, we consider a labeling of its vertices such that the maximal cliques $F_1,F_2,\ldots,F_r$ are intervals as in 
condition (iii) of the above theorem. Note that this is a leaf order of the facets of $\Delta(G).$
As usual,  if $F_1,\ldots, F_r$ are all the facets of $\Delta(G)$, we write 
$\Delta(G)=\langle F_1,\ldots,F_r\rangle.$

By using condition (iii) in the above theorem, we may characterize the cut sets of a closed graph, but before we need a definition. 

\begin{Definition}
Let $G$ be a connected closed graph with $\Delta(G)=\langle F_1,\ldots,F_r\rangle$ with $F_i=[a_i,b_i]$ such that 
$1=a_1<\cdots < a_r<b_r=n.$ Let $W_i=F_i\cap F_{i+1}$ for $1\leq i\leq r-1.$  We call the sets $W_i$ the 
\emph{connected cut sets} of $G.$
\end{Definition}

 Clearly, $W_i$ are cut sets of $G.$ Indeed, we have 
$c(W_i)=2$ and the connected components of $G\setminus W_i$ are $(F_1\cup\cdots\cup F_i)\setminus W_i$ and 
$(F_{i+1}\cup\cdots \cup F_r)\setminus W_i.$ If $j\in W_i,$ then there exists a path connecting a vertex from $(F_1\cup\cdots\cup F_i)\setminus W_i$, the vertex $j,$ and a vertex from $(F_{i+1}\cup\cdots \cup F_r)\setminus W_i.$ Therefore, 
$c(W_i\setminus\{j\})=1< c(W_i).$ 

\begin{Proposition}\label{prop:cutsetsclosed}
Let $G$ be a connected closed graph and a  non-empty set $W\subset [n].$ Then the following are equivalent:
\begin{itemize}
	\item [\emph{(a)}] $W$ is a cut set of $G;$
	\item [\emph{(b)}] $W=W_{j_1}\cup \cdots \cup W_{j_t}$ for some $t\geq 1, 1\leq j_1<\cdots <j_t\leq r-1,$ where $W_{j_i}$ is a connected cut set of $G$ 
		for every $i,$ and $\max(W_{j_i})+1<\min W_{j_{i+1}}$ for 
$1\leq i\leq t-1.$
\end{itemize}
Moreover, $\dim S/P_W(G)\leq n+1$ with equality if and only if $|W_{j_i}|=1,$ for every $1\leq i\leq t.$
\end{Proposition}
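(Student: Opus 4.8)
The plan is to reduce everything to two purely combinatorial facts about a connected closed graph and then read off both the equivalence and the dimension statement from the identity $\dim S/P_W(G)=n-|W|+c(W)$ together with Proposition~\ref{cpset}. Throughout I keep the labelling $F_i=[a_i,b_i]$ with $a_i,b_i$ strictly increasing and $W_i=F_i\cap F_{i+1}=[a_{i+1},b_i]$.

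First I would record the local structure. Since $G$ is connected, consecutive facets must overlap, i.e. $a_{i+1}\le b_i$ for all $i$; this forces every pair $\{u,u+1\}$ to lie in a common facet, so $1,2,\dots,n$ is a Hamiltonian path of $G$. Next I would establish the adjacency criterion: for $u<v$, the vertices $u$ and $v$ are \emph{non}-adjacent if and only if some connected cut set $W_i$ lies strictly between them, that is $u<\min W_i\le\max W_i<v$. This follows at once by taking $k=\max\{j:a_j\le u\}$ and comparing $v$ with $b_k$. Finally, because each facet $F_k$ is a clique, all vertices of $F_k\setminus W$ lie in a single connected component of $G\setminus W$.

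The heart of the argument is a component count. Writing $W$ as the disjoint union of its maximal integer intervals (blocks) $B_1,\dots,B_m$, I would call a block $B$ \emph{separating} when $B\supseteq W_i$ for some $i$; by the adjacency criterion this is exactly the condition that the two vertices flanking $B$ are non-adjacent. Using the Hamiltonian path (each run of $[n]\setminus W$ is connected) together with the clique observation, I expect the formula $c(W)=1+\#\{\ell:B_\ell\text{ is separating}\}$ to hold whenever $1,n\notin W$: across a non-separating block the two neighbouring runs are joined by an edge, while a separating block admits no such edge. The delicate point — which I expect to be the main obstacle — is to rule out a \emph{long-range} edge produced by a single large facet that would merge two non-consecutive runs, and to handle the boundary correctly. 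Both are dispatched by noting that a separating block is a ``hard cut'' (no facet can contain vertices on both of its sides), and that vertices $1$ and $n$ each lie in a unique facet and so are never essential, whence $1,n\notin W$ for every cut set.

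With the formula in hand the rest is bookkeeping through Proposition~\ref{cpset}. Removing a single $v\in W$ only splits the block $B_\ell\ni v$ into at most two subintervals, so $c(W\setminus\{v\})<c(W)$ holds if and only if this split destroys every $W_i\subseteq B_\ell$, that is, if and only if $v$ lies in every connected cut set contained in $B_\ell$. Hence $W$ is a cut set precisely when every block is separating and equals a single $W_i$: a non-separating block would contain a non-essential vertex, and a separating block properly larger than the $W_i$ it contains would have an interior vertex avoided by that $W_i$, again non-essential. This is exactly the description $W=W_{j_1}\cup\dots\cup W_{j_t}$ with the $W_{j_i}$ being the gap-separated blocks, i.e. condition (b); conversely, if $W$ has this form the blocks are precisely the $W_{j_i}$, each equal to the unique connected cut set it contains, so every vertex is essential. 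For the final assertion, the blocks give $c(W)=t+1$ and $|W|=\sum_{i=1}^t|W_{j_i}|$, so that
\[
\dim S/P_W(G)=n-|W|+c(W)=n+1+\sum_{i=1}^t\bigl(1-|W_{j_i}|\bigr)\le n+1,
\]
with equality if and only if $|W_{j_i}|=1$ for every $i$.
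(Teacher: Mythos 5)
Your argument is correct, but it takes a genuinely different route from the paper's. The paper proves the hard direction (a)$\Rightarrow$(b) by induction on the number $r$ of facets, with a case split on whether $W\supseteq W_1$: if so, it rewrites $P_W(G)=(x_i,y_i:i\in W_1)+P_{W\setminus W_1}(G\setminus W_1)$, analyses which facets survive in $\Delta(G\setminus W_1)$, and applies the inductive hypothesis to $G\setminus W_1$; if not, it shows $W\subseteq W_2\cup\cdots\cup W_{r-1}$ and passes to the graph with clique complex $\langle F_1\cup F_2,F_3,\ldots,F_r\rangle$. You instead argue non-inductively: you decompose an arbitrary $W$ into its maximal interval blocks, prove the adjacency criterion ($u,v$ non-adjacent iff some $W_i$ lies strictly between them), and deduce the closed formula $c(W)=1+(\text{number of separating blocks})$, from which both the characterization of cut sets via Proposition~\ref{cpset} and the dimension bound follow by inspecting which vertices are essential. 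Your route buys an explicit formula for $c(W)$ valid for every $W$ with $1,n\notin W$ and avoids the inductive bookkeeping, while the paper's decomposition of $P_W(G)$ is reused almost verbatim later (in the proof of Theorem~\ref{Th:SCMClosed}), which is presumably why the authors set it up this way. Two small points to polish in a full write-up: the ``if and only if'' in your essential-vertex step is stated so that a non-separating block vacuously satisfies the condition, even though removing a vertex from it leaves $c(W)$ unchanged --- you correct for this a sentence later, but the statement itself should exclude that case; and the claim that a separating block admits no crossing edge deserves the one-line justification that any $u<\min B_\ell\leq\max B_\ell<v$ has the $W_i\subseteq B_\ell$ strictly between them, hence is a non-edge by your criterion.
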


\begin{proof} Let $\Delta(G)=\langle F_1,\ldots,F_r\rangle$ and $F_i=[a_i,b_i]$ such that 
$1=a_1<\cdots < a_r<b_r=n.$
Let $W=W_{j_1}\cup \cdots \cup W_{j_t}$ as in the statement. Then $c(W)=t+1.$ Let $G_1,\ldots,G_{t+1}$ be the connected components of 
$G\setminus W,$ where $\max V(G_k)<\min V(G_{k+1})$ for $1\leq k\leq t.$ Let $i\in W.$ There exists $1\leq \ell\leq t$ such that 
$i\in W_{j_\ell}.$ Then there exists a path connecting a vertex of $G_\ell,$ the vertex $j$, and a vertex of $G_{\ell+1}.$ Therefore, 
$c(W\setminus\{i\})=t<c(W).$ This shows that $W$ is a cut set of $G.$

For the converse, we proceed by induction on $r.$ For $r=2,$ the claim is clear since  the only non-empty cut set of $G$ is 
$W_1=F_1\cap F_2.$ Let $r>2$ and let $W$ be a non-empty cut set of $G.$ We consider two cases.

\emph{Case 1.} $W\supseteq W_1.$ If $W=W_1,$ we are done. Let $W\supsetneq W_1.$ We have $P_{W}(G)=(x_i,y_i\: i\in W) +J_{\tilde{G_1}}+\cdots +J_{\tilde{G}_{c_G(W)}}$ where 
$G_1,\ldots,G_{c_G(W)}$ are the connected components of $G\setminus W.$ We can write
\[
P_{W}(G)=(x_i,y_i:i\in W_1)+(x_i,y_i: i\in W\setminus W_1) +J_{\tilde{G_1}}+\cdots +J_{\tilde{G}_{c_G(W)}}\] 
\[
=(x_i,y_i:i\in W_1)+P_{W\setminus W_1}(G\setminus W_1).
\]
Let us observe that $P_{W\setminus W_1}(G\setminus W_1)$ is a minimal prime of $G\setminus W_1,$ or, equivalently, $W\setminus W_1$
is a cut set of $G\setminus W_1.$ Indeed, $c_{G\setminus W_1}(W\setminus W_1)>c_{G\setminus W_1}((W\setminus W_1)\setminus\{j\})$ for all
$j\in W\setminus W_1,$ since, otherwise, $c_G(W)\leq c_G(W\setminus\{j\}).$ The graph $G\setminus W_1$ consists of two connected components. The first one is the clique on the set $F_1\setminus W_1$ and the second one is a closed graph whose maximal cliques are 
the maximal elements in the set $\{F_q\setminus W_1, F_{q+1},\ldots,F_r\}$ where the index $q$ is defined by the condition 
$a_q\leq \max W_1<a_{q+1}.$ In fact, if $a_{q+1}=\max W_1+1,$ then $\Delta(G\setminus W_1)=\langle F_1\setminus W_1,F_{q+1},\ldots,F_r\rangle$
and if $a_{q+1}>\max W_1+1,$ then $\Delta(G\setminus W_1)=\langle F_1\setminus W_1,F_q\setminus W_1,F_{q+1},\ldots,F_r\rangle.$

By induction, $W\setminus W_1=W'_{j_1}\cup \cdots \cup W'_{j_s}$ for some $s\geq 1$, where $W'_{j_i}$ are connected cut sets of $G\setminus W_1$ and of $G$ as well, for $1\leq i\leq s.$ In addition, by the inductive hypothesis, the sets 
$ W'_{j_i}$ satisfy the conditions of the statement. Thus, we only need to show that $\max W_1+1<\min W'_{j_1},$ but this is clear since 
$\min W'_{j_1}> a_{q+1}$ if $a_{q+1}=\max W_1+1$ and $\min W'_{j_1}= a_{q+1}$ if $a_{q+1}>\max W_1+1.$

\emph{Case 2.} $W\not\supset W_1.$ In this case we show that $W\subset W_2\cup \cdots\cup W_{r-1}.$ Then it will  follow that $W$ is a cut set 
of the connected closed graph $G'$ with $\Delta(G')=\langle F_1\cup F_2,F_3,\ldots,F_r\rangle.$ Therefore, by induction on $r$, we get the desired form of $W.$ To show that $W\subset W_2\cup \cdots\cup W_{r-1}$ we proceed by contradiction. Let us assume that 
$W\not\subset  W_2\cup \cdots\cup W_{r-1}.$ Then there exists some vertex $i\in W_1\setminus (W_2\cup \cdots\cup W_{r-1}).$ On the other hand, 
there exists a vertex $j\in W_1\setminus W$ since $W\not\supset W_1.$ 
Since $j\in W_1=F_1\cap F_2$, we have $c_G(W\setminus\{i\})=c(W),$ a contradiction.
For the last part of the statement, we observe that  $\dim S/P_W(G)=n+c(W)-|W|=n+(t+1)-\sum_{i=1}^t |W_{j_i}|\leq n+1$ with equality if and only if $\sum_{i=1}^t |W_{j_i}|=t,$ that is, if and only if $|W_{j_i}|=1$ for all $i.$
\end{proof}

For a vertex $v$ of a graph $G,$ we denote by $\cdeg(v)$ the \emph{clique degree} of $v,$ that is, the number of cliques to which the vertex $v$ belongs. In particular, for a tree,
the clique degree of $v$ is the ordinary degree. The vertex $v$ of $G$ is called a \emph{free vertex} if $\cdeg(v)=1.$ Otherwise, $v$ is called an \emph{inner vertex}. According to \cite[Definition 1.2]{HeRi}, a connected graph $G$ is called \emph{decomposable} if there exists two subgraphs 
$G_1$, $G_2$ of $G$ such that $G=G_1\cup G_2$ with $V(G_1)\cap V(G_2)=\{v\}$ where $v$ is a free vertex in $G_1$ and $G_2.$ If $G$ is not decomposable, then $G$ is called an \emph{indecomposable graph}.  Clearly, every connected graph $G$ has a unique (up to ordering) decomposition $G=G_1\cup G_2\cup\cdots \cup G_m$ where $G_1,\ldots,G_m$ are indecomposable graphs and, for $1\leq i<j\leq m,$ either 
$V(G_i)\cap V(G_j)=\emptyset$ or $V(G_i)\cap V(G_j)=\{v\}$ and $v$ is a free vertex in $G_i$ and $G_j.$ 

\begin{Remark}\label{R:decompose}
Let $G$ be a connected closed graph on the vertex set $[n].$ Then $J_G$ is unmixed if and only if there exist integers $1=a_1<a_2<\cdots <a_r<a_{r+1}=n$ such that  the facets $F_1,\ldots,F_r$ of $\Delta(G)$ are of the form $F_i=[a_i,a_{i+1}]$ for all $i=1,\ldots,r$. Indeed, as we have noticed above, $J_G$ is unmixed  if and only if   $c(W)=|W|+1$ for all $W \in \MC(G).$ In particular, it follows that all the connected cut sets of $G$ have cardinality $1.$
On the other hand, By \cite[Proposition 1.3]{HeRi} or \cite{RaRi},
if a graph $G$ is decomposable as $G=G_1\cup G_2\cup\cdots \cup G_m$, then 
$J_G$ is Cohen-Macaulay if and only if $J_{G_i}$ is Cohen-Macaulay for all $\leq i \leq m.$
In particular, for a connected closed graph $G$, we may decide whether $J_G$ is Cohen-Macaulay or not by looking at the facets of the clique complex $\Delta(G).$ More precisely, $J_G$ is Cohen-Macaulay if and only if the facets of $\Delta(G)$ are intervals of the form 
$F_i=[a_i,a_{i+1}]$ for all $i=1,\ldots,r$ with  $1=a_1<a_2<\cdots <a_r<a_{r+1}=n$. Indeed, if $J_G$ is Cohen-Macaulay, then $J_G$ is unmixed and the claim follows. Conversely, if the facets of $\Delta(G)$ are intervals as above, then $G$ is decomposable in $r$ complete graphs, thus $J_G$ is Cohen-Macaulay. Note that this characterization of Cohen-Macaulay binomial edge ideals of closed graphs was proved in \cite[Theorem~3.1]{EHH}. 
\end{Remark}



\subsection{Sequentially Cohen-Macaulay ideals}
\label{SubS:SCMideals}

In this section we review known facts about sequentially Cohen-Macaulay graded ideals that we are going to use in the further sections.

\begin{Definition}\label{Def:SCMmodules}
Let $M$ be a finitely generated graded module over a standard graded polynomial $K$-algebra $R=K[x_1,\ldots,x_n].$ The module $M$ is called \emph{sequentially Cohen-Macaulay} if there exists a finite filtration with graded submodules 
\[
0=M_0\subset M_1\subset \cdots \subset M_r=M
\]
such that the following two conditions are fulfilled: 
\begin{itemize}
	\item [(1)] $M_i/M_{i-1}$ is a Cohen-Macaulay module for $1\leq i\leq r;$
	\item [(2)] $\dim(M_1/M_0)<\dim(M_2/M_1)<\cdots <\dim(M_r/M_{r-1}).$
\end{itemize}
\end{Definition}

Sequentially Cohen-Macaulay modules were introduced by Stanley \cite{Sta}. In the original definition \cite[Definition 2.9]{Sta}, there were considered graded modules over arbitrary graded algebras, but in what follows we may restrict to graded modules over standard graded polynomial rings. Later on, several authors have considered sequentially Cohen-Macaulay modules over local rings; see, for example, 
\cite{CN, Sch}.

\begin{Remark}\label{R:disconectSCM} We may reduce the study of the sequentially Cohen-Macaulay property of binomial edge ideals to connected graphs. This is due to \cite[Theorem~2.11]{STY} and to the decomposition of $S/J_G$ as a tensor product as
\[
\frac{S}{J_G}\cong \frac{S_1}{J_{G_1}}\otimes \frac{S_2}{J_{G_2}}\otimes \cdots \otimes \frac{S_c}{J_{G_c}},
\] if $G_1,G_2,\ldots,G_c$ are the connected components of $G,$ where $S_i=K[\{x_j,y_j: j\in V(G_i)\}]$ for $1\leq i\leq c.$  

Moreover, we can even reduce to indecomposable graphs. Indeed, if $G$ is decomposable as $G=G_1\cup G_2$ along the vertex $v,$ then, 
by the proof of  \cite[Theorem~2.7]{RaRi}, we may consider a new graph $G'$  with the connected components $G_1$ and 
$G_2'$ where $G_2'$ is obtained as follows. We remove the vertex $v$ from $G_2$ and add a new vertex $v',$ thus 
$V(G_2')=(V(G_2)\setminus\{v\})\cup \{v'\},$ and the edge set of $G_2'$ is $E(G_2')=E(G_2\setminus v)\cup \{\{i,v'\}: \{i,v\}\in E(G_2)\}.$
Set $S'=S[x_{v'},y_{v'}], \ell_x=x_v-x_{v'}, \ell_y=y_v-y_{v'}.$ Then, as it was proved in \cite{RaRi}, $\ell_x,\ell_y$ is a regular sequence on $S'/J_{G'}$ and $S/J_G\cong S'/(J_{G'},\ell_x,\ell_y).$ Combining this with \cite[Theorem 4.7]{Sch} which holds in the graded case as well, it follows that for a decomposable graph  $G=G_1\cup G_2$, $J_G$ is sequentially Cohen-Macaulay if  and only if 
$J_{G_1}$ and $J_{G_2}$ are sequentially Cohen-Macaulay.
\end{Remark}

We recall a characterization of sequentially Cohen-Macaulay homogeneous ideals due to Goodarzi \cite{Goo}. 

Let $R=K[x_1,\ldots,x_n]$ be a polynomial ring  endowed with the standard grading and $I\subset R$ a homogeneous ideal. Let $I=\bigcap_{j=1}^s Q_j$ be a reduced primary decomposition of $I$ and set $P_j=\sqrt{Q_j}$ for $1\leq j\leq s.$ We denote by $I^{\langle i\rangle}$ the ideal 
\[
I^{\langle i\rangle}=\bigcap_{\dim R/P_j >i} Q_j.
\] for $-1\leq i\leq \dim R/I-1.$
By \cite[Proposition~16]{Goo}, we have the following.

\begin{Proposition}\label{Prop:Goo}
Let $I\subseteq R=K[x_1,\ldots,x_n]$ be a homogeneous ideal in a standard graded polynomial ring.\ Then $R/I$ is sequentially Cohen-Macaulay if and only if $\depth R/I^{\langle i\rangle}\geq i+1$ for all 
$0\leq i\leq \dim R/I-1.$
\end{Proposition}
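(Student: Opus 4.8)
The plan is to reinterpret the ideals $I^{\langle i\rangle}$ as the \emph{dimension filtration} of $R/I$ and then to invoke the characterization of sequentially Cohen-Macaulay modules in terms of that filtration, converting the Cohen-Macaulayness of the filtration quotients into the asserted depth inequalities by repeated use of the depth lemma. Write $I=\bigcap_{j=1}^s Q_j$ with $P_j=\sqrt{Q_j}$ and $d=\dim R/I$, and set $D_i:=I^{\langle i\rangle}/I\subseteq R/I$ for $-1\le i\le d$, so that $D_{-1}=0$ and $D_d=R/I$ (here $I^{\langle -1\rangle}=I$ and $I^{\langle d\rangle}=R$, the latter an empty intersection). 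I would first show that $D_i$ is the largest graded submodule of $R/I$ of dimension $\le i$. For the dimension bound, put $J_i:=\bigcap_{\dim R/P_j\le i}Q_j$, so that $I=I^{\langle i\rangle}\cap J_i$ and the second isomorphism theorem gives $I^{\langle i\rangle}/I\cong (I^{\langle i\rangle}+J_i)/J_i\hookrightarrow R/J_i$; since $\dim R/J_i\le i$ we get $\dim D_i\le i$. For maximality, any submodule $N\subseteq R/I$ with $\dim N\le i$ has every associated prime of dimension $\le i$, hence disjoint from $\Ass(R/I^{\langle i\rangle})=\{P_j:\dim R/P_j>i\}$, so the composite $N\hookrightarrow R/I\to R/I^{\langle i\rangle}$ has zero image and $N\subseteq D_i$. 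Thus $(D_i)_{-1\le i\le d}$ is exactly the dimension filtration of $R/I$, with quotients $D_i/D_{i-1}\cong I^{\langle i\rangle}/I^{\langle i-1\rangle}$, a module of dimension $\le i$ whose associated primes are the $P_j$ with $\dim R/P_j=i$.

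Next I would quote the basic fact, due to Schenzel \cite{Sch} (see also \cite{CN}), that a finitely generated graded module $M$ is sequentially Cohen-Macaulay in the sense of Definition~\ref{Def:SCMmodules} if and only if every nonzero quotient $D_i(M)/D_{i-1}(M)$ of its dimension filtration is Cohen-Macaulay, necessarily of dimension $i$; this replaces the abstract filtration in the definition by the canonical one computed above. It then remains to convert the conditions ``$C_i:=I^{\langle i\rangle}/I^{\langle i-1\rangle}$ is Cohen-Macaulay of dimension $i$, or zero, for $0\le i\le d$'' into ``$\depth R/I^{\langle i\rangle}\ge i+1$ for $0\le i\le d-1$.'' The tool is the depth lemma applied to
\[
0\to C_i\to R/I^{\langle i-1\rangle}\to R/I^{\langle i\rangle}\to 0,
\]
together with the observation that $\dim R/I^{\langle i\rangle}=d$ for $i\le d-1$ (the top-dimensional $P_j$ survive) while $R/I^{\langle d\rangle}=0$.

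For ``sequentially Cohen-Macaulay $\Rightarrow$ depth condition'' I would use descending induction on $i$. The base case $i=d-1$ is immediate, since $R/I^{\langle d-1\rangle}=C_d$ is Cohen-Macaulay of dimension $d$. For the step, assuming $\depth R/I^{\langle i\rangle}\ge i+1$, the depth lemma gives $\depth R/I^{\langle i-1\rangle}\ge\min\{\depth C_i,\depth R/I^{\langle i\rangle}\}\ge\min\{i,i+1\}=i$, which is the assertion at index $i-1$. For the converse, fixing $i$ I combine the hypotheses $\depth R/I^{\langle i-1\rangle}\ge i$ and $\depth R/I^{\langle i\rangle}\ge i+1$ with the other inequality of the depth lemma to obtain $\depth C_i\ge\min\{\depth R/I^{\langle i-1\rangle},\depth R/I^{\langle i\rangle}+1\}\ge i$; since $\dim C_i\le i$, this forces $C_i$ to be Cohen-Macaulay of dimension $i$ whenever it is nonzero. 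The boundary indices $i=0$ (where $C_0$ has dimension $\le 0$ and is automatically Cohen-Macaulay) and $i=d$ (where $C_d=R/I^{\langle d-1\rangle}$) are then checked directly.

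The main obstacle I anticipate is not the depth bookkeeping, which is routine once the right short exact sequence is in place, but rather the clean identification of $I^{\langle i\rangle}/I$ with the dimension filtration together with the precise dimensions of the quotients $C_i$: reading off the associated primes of $R/I^{\langle i\rangle}$ and of $C_i$ from the reduced primary decomposition, and correctly handling the degenerate indices $i=-1,0,d$, is where care is needed. The Schenzel characterization of sequential Cohen-Macaulayness via the dimension filtration is the one substantial external input, and everything else is an exercise with the depth lemma.
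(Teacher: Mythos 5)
The paper does not prove this statement at all: it is quoted verbatim from Goodarzi \cite{Goo} (Proposition~16 there), so there is no in-paper argument to compare yours against. Your proof is correct, and it is essentially a reconstruction of the standard argument behind Goodarzi's result. The two load-bearing points are both handled properly: (i) the identification of $I^{\langle i\rangle}/I$ with the dimension filtration of $R/I$, where your dimension bound via $I=I^{\langle i\rangle}\cap J_i$ and your maximality argument via $\Ass(R/I^{\langle i\rangle})=\{P_j:\dim R/P_j>i\}$ (which does require noting that a subcollection of an irredundant primary decomposition stays irredundant) are both sound; and (ii) the translation between Cohen--Macaulayness of the quotients $C_i=I^{\langle i\rangle}/I^{\langle i-1\rangle}$ and the depth inequalities, where you apply the correct two inequalities of the Depth Lemma to the sequence $0\to C_i\to R/I^{\langle i-1\rangle}\to R/I^{\langle i\rangle}\to 0$ and the dimension count $\dim C_i\le i$ forces $C_i$ to be Cohen--Macaulay in the converse direction. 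The one substantial external input, Schenzel's characterization of sequential Cohen--Macaulayness via the dimension filtration, is exactly the input Goodarzi himself uses, and it is cited in this paper as \cite{Sch}; quoting it is legitimate here. The only cosmetic caveat is your parenthetical that the associated primes of $C_i$ are ``the $P_j$ with $\dim R/P_j=i$'': the embedding $C_i\hookrightarrow R/\bigcap_{\dim R/P_j=i}Q_j$ only gives containment of $\Ass C_i$ in that set, but nothing in your argument uses more than $\dim C_i\le i$, so this does not affect correctness.
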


\section{Sequentially Cohen-Macaulay binomial edge ideals of closed graphs}
\label{S:closed}

In  this section we characterize the  closed graphs whose binomial edge ideals are sequentially Cohen-Macaulay. In view of 
Remark~\ref{R:disconectSCM}, in what follows, we consider only connected indecomposable graphs. Note that a graph with a unique clique is a complete graph
hence with a Cohen-Macaulay binomial edge ideal.

To begin with, 
by using Proposition~\ref{Prop:Goo} we can prove that  indecomposable closed graphs with two maximal cliques have sequentially Cohen-Macaulay binomial edge ideals. 

\begin{Proposition}\label{Prop:2cliques}
Let $G$ be an indecomposable closed graph with $\Delta(G)=\langle F_1,F_2\rangle.$ Then $J_G$ is sequentially Cohen-Macaulay.
\end{Proposition}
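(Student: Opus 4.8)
The plan is to verify Goodarzi's criterion (Proposition~\ref{Prop:Goo}) directly. First I would fix the interval structure: since $\Delta(G)=\langle F_1,F_2\rangle$ with $F_1=[1,b_1]$ and $F_2=[a_2,n]$, the unique connected cut set is $W_1=F_1\cap F_2=[a_2,b_1]$, and indecomposability forces $|W_1|\geq 2$, since a one-element intersection $\{v\}$ would split $G$ into the cliques on $F_1$ and $F_2$ meeting along the free vertex $v$. By Proposition~\ref{prop:cutsetsclosed} the only cut sets are $\emptyset$ and $W_1$, so $J_G=P_\emptyset(G)\cap P_{W_1}(G)$ has exactly two minimal primes. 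Using \eqref{eq:dimension} one gets $\dim S/P_\emptyset(G)=n+1$ while $\dim S/P_{W_1}(G)=n-|W_1|+2=:d$, with $d\leq n$; in particular $\dim S/J_G=n+1$ and $J_G$ is not unmixed.

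Because $J_G$ is radical, the truncation appearing in Goodarzi's criterion is just the intersection of the minimal primes of dimension strictly larger than $i$. Hence $(J_G)^{\langle i\rangle}=P_\emptyset(G)$ for $d\leq i\leq n$, and $(J_G)^{\langle i\rangle}=J_G$ for $0\leq i\leq d-1$. Proposition~\ref{Prop:Goo} therefore reduces, after checking the ranges of $i$ against $\dim S/J_G-1=n$, to the two inequalities $\depth S/P_\emptyset(G)\geq n+1$ and $\depth S/J_G\geq d$.

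The first inequality is immediate: $P_\emptyset(G)=J_{\tilde G}$ is the ideal of $2\times 2$ minors of the generic $2\times n$ matrix, which is Cohen--Macaulay, so $\depth S/P_\emptyset(G)=\dim S/P_\emptyset(G)=n+1$. The inequality $\depth S/J_G\geq d$ is the technical core, and I would derive it from the Mayer--Vietoris sequence
\[
0\to S/J_G\to S/P_\emptyset(G)\oplus S/P_{W_1}(G)\to S/\bigl(P_\emptyset(G)+P_{W_1}(G)\bigr)\to 0
\]
together with the depth lemma. Here $S/P_{W_1}(G)$ is Cohen--Macaulay of dimension $d$, as it is a tensor product of two generic determinantal rings on disjoint variable sets modulo the variables indexed by $W_1$; and $P_\emptyset(G)+P_{W_1}(G)=J_{\tilde G}+(x_i,y_i:i\in W_1)$ is, after deleting the $W_1$-columns, the ideal of $2\times 2$ minors of the generic $2\times(n-|W_1|)$ matrix, hence Cohen--Macaulay of dimension $d-1$. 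The depth lemma then yields $\depth S/J_G\geq\min\{n+1,\,d,\,(d-1)+1\}=d$. With both inequalities in hand, Goodarzi's criterion gives that $J_G$ is sequentially Cohen--Macaulay. I expect the main obstacle to be the explicit identification of the two auxiliary quotients $S/P_{W_1}(G)$ and $S/(P_\emptyset(G)+P_{W_1}(G))$ as (products of) determinantal rings, since it is precisely their Cohen--Macaulayness and exact dimensions $d$ and $d-1$ that make the depth estimate sharp enough to reach $d$.
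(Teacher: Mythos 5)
Your proposal is correct and follows essentially the same route as the paper: identify the two minimal primes $P_\emptyset(G)$ and $P_{W_1}(G)$, reduce Goodarzi's criterion to the single inequality $\depth S/J_G\geq\dim S/P_{W_1}(G)$, and obtain it from the Mayer--Vietoris sequence and the Depth Lemma, using that $P_\emptyset(G)+P_{W_1}(G)$ is a Cohen--Macaulay (determinantal) ideal of dimension one less. Your version merely spells out the truncations $(J_G)^{\langle i\rangle}$ more explicitly than the paper does.
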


\begin{proof}
Let us assume that $F_1=[1,b]$ and $F_2=[a,n]$ for some integers $1<a < b <n.$ Then $J_G$ has two cut sets, namely $\emptyset$ and $W=[a,b].$ We have 
\[
\dim \frac{S}{P_\emptyset(G)}=n+1 \mbox{ and } \dim \frac{S}{P_W(G)}=n+2-(b-a+1)=n+a-b+1\leq n.
\] Since $\depth S/P_\emptyset(G)=n+1$ because $P_\emptyset(G)$ is a Cohen-Macaulay ideal, by Proposition~\ref{Prop:Goo}, we only need to show that $\depth S/J_G\ge n+a-b+1.$ The equality $J_G=P_\emptyset(G)\cap P_W(G)$ induces the short exact sequence
\[ 0 \to \frac{S}{J_G}\to \frac{S}{P_\emptyset(G)}\oplus \frac{S}{P_W(G)} \to  \frac{S}{P_\emptyset(G)+P_W(G)}\to 0.\]
We observe that $P_\emptyset(G)+P_W(G)=(x_i,y_i: i\in W)+P_\emptyset(G)$  is a Cohen-Macaulay ideal with 
\[
\depth \frac{S}{P_\emptyset(G)+P_W(G)}=n-|W|+1=n-(b-a+1)+1=n+a-b \] 
\[<n+a-b+1=\depth\left(\frac{S}{P_\emptyset(G)}\oplus \frac{S}{P_W(G)}\right).
\] Therefore, by Depth Lemma, we get $\depth S/J_G=n+a-b+1.$
\end{proof}

In the next part, we consider indecomposable closed graphs with at least three maximal cliques. 
The main result of this section is the following.

\begin{Theorem}\label{Th:SCMClosed}

Let $G$ be an indecomposable closed graph with the clique complex $\Delta(G)=\langle F_1,F_2,\ldots,F_{r+1}\rangle, r\geq 2$
where $F_i=[a_{i-1},b_i]$ for $1\leq i\leq r+1$ and $1=a_0<a_1<\ldots<a_r,$ $b_1<\ldots<b_{r+1}=n.$
Then the following conditions are equivalent:
\begin{itemize}
\item [\emph{(a)}] $J_G$ is sequentially Cohen-Macaulay;
\item [\emph{(b)}] there exists $1\leq k\leq r$ such that $a_i=a_r -(r-i)$ for $k \leq i \leq r$ and $b_j=b_1+(j-1)$ for $1\leq j \leq k.$
\end{itemize}
\end{Theorem}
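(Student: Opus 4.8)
The plan is to reduce both implications to Goodarzi's criterion (Proposition~\ref{Prop:Goo}), so the first task is to describe the truncated ideals
\[
J_G^{\langle i\rangle}=\bigcap_{\dim S/P_W(G)>i}P_W(G),
\]
where $W$ ranges over $\MC(G)$. By Proposition~\ref{prop:cutsetsclosed} every such $W$ is a union $W_{j_1}\cup\cdots\cup W_{j_t}$ of spread-out connected cut sets with $\dim S/P_W(G)=n+1-\sum_{\ell}(\abs{W_{j_\ell}}-1)$. From this I would record that the top-dimensional primes, of dimension $n+1$, are exactly $P_\emptyset(G)$ together with the $P_W(G)$ for which every $W_{j_\ell}$ is a single vertex, i.e. those supported on the indices $j$ with $a_j=b_j$. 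This pins down the two ends of the dimension filtration and isolates where the numerical data $a_i,b_j$ enter.

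For the necessity (a)$\Rightarrow$(b) I would use that the sequentially Cohen-Macaulay property passes to localizations: localizing $S/J_G$ at a prime $P_W(G)$ — equivalently inverting the variables attached to the vertices lying outside a block of consecutive cliques — reduces $G$ to a sub-configuration with few maximal cliques whose binomial edge ideal must again be sequentially Cohen-Macaulay. For such a small configuration, Goodarzi's condition at an appropriate value of $i$ just below the top dimension forces a depth bound $\depth S/J_G^{\langle i\rangle}\ge i+1$; computing this depth through the intersection $J_G^{\langle i\rangle}=P_\emptyset(G)\cap Q$, with $Q$ the intersection of the remaining higher-dimensional primes, and the Depth Lemma — exactly as in Proposition~\ref{Prop:2cliques} — shows the bound fails unless the relevant left-ends (respectively right-ends) are consecutive at that spot. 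Sweeping this test across all positions forces the existence of a single pivot $k$ separating the consecutive right-end block $b_1,\dots,b_k$ from the consecutive left-end block $a_k,\dots,a_r$, which is precisely (b).

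For the sufficiency (b)$\Rightarrow$(a) I would induct on the number $r+1$ of maximal cliques, the base case of two cliques being Proposition~\ref{Prop:2cliques}. Assuming (b) with pivot $k$, deleting the extreme clique $F_{r+1}$ yields a closed graph $G'$ on $F_1,\dots,F_r$ that still satisfies (b) (with pivot $k$ when $k\le r-1$ and with pivot $r-1$ when $k=r$); after passing to its indecomposable components via Remark~\ref{R:disconectSCM}, the inductive hypothesis gives that $J_{G'}$ is sequentially Cohen-Macaulay. The link between the two graphs is made through a general hyperplane section, where by \cite[Section~3]{MVY} the associated (here minimal) primes of the section are the expected contractions of those of $J_G$ and no spurious embedded primes appear, so that each truncation $J_G^{\langle i\rangle}$ degenerates to the corresponding truncation for $G'$. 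One then verifies Goodarzi's inequality $\depth S/J_G^{\langle i\rangle}\ge i+1$ for every $i$ by feeding the inductive depth bounds for $G'$ into the Depth Lemma applied to
\[
0\to \frac{S}{J_G^{\langle i\rangle}}\to \frac{S}{P_\emptyset(G)}\oplus\frac{S}{Q^{(i)}}\to \frac{S}{P_\emptyset(G)+Q^{(i)}}\to 0,
\]
where $Q^{(i)}$ collects the remaining primes of dimension $>i$, and by checking that the sum ideal $P_\emptyset(G)+Q^{(i)}$ is Cohen-Macaulay of the predicted dimension.

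I expect the sufficiency to be the main obstacle, and within it the simultaneous verification of Goodarzi's inequality for all $i$. The difficulty is that the truncated ideals $J_G^{\langle i\rangle}$ have intricate primary decompositions, so showing that the correction term $P_\emptyset(G)+Q^{(i)}$ is Cohen-Macaulay with exactly the right depth — which is what makes the Depth Lemma yield $\ge i+1$ rather than less — is delicate. It is exactly here that the numerical condition (b) is genuinely used, and where the control of $\Ass$ under hyperplane sections from \cite{MVY} is needed to keep the induction coherent.
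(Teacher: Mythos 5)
Your overall architecture --- Goodarzi's criterion on both sides, localization for necessity, induction plus the Depth Lemma for sufficiency --- matches the paper's, but both halves have gaps where the actual work lives.

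For (a) $\Rightarrow$ (b): localizing at a prime of the form $P_W(G)$ cannot work as literally stated, because by Proposition~\ref{prop:cutsetsclosed} these are exactly the \emph{minimal} primes of $J_G$, so $(S/J_G)_{P_W(G)}$ is Artinian and trivially Cohen--Macaulay; no obstruction can be detected there. The paper instead localizes at the carefully chosen non-minimal prime $P=(x_i,y_i: i\in[a,d])+P_\emptyset(G)$ attached to a pair of ``bad'' connected cut sets $W_1=[a,b]$, $W_2=[c,d]$; this prime contains several minimal primes at once, and the point of the whole computation is to identify the truncation $(J_G)^{\langle k\rangle}_P$ at the specific level $k=c-a+2$ as $(P_\emptyset(G))_P\cap(P_{W_1}(G))_P\cap(P_{W_2}(G))_P$ and then bound the depth of the sum $P_\emptyset(G)\cap P_{W_1}(G)+P_{W_2}(G)$ via an explicit identification of that sum with a smaller binomial edge ideal plus variables (Lemma~\ref{L:lemma1}). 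Your sketch gestures at ``the relevant left-ends must be consecutive at that spot'' but does not supply the choice of localizing prime, the level $k$, or the computation of the correction term, and these are not routine.

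For (b) $\Rightarrow$ (a): the proposed short exact sequence splitting off $P_\emptyset(G)$ from $Q^{(i)}$ (the intersection of all remaining primes of dimension $>i$) does not set up a usable induction, because $Q^{(i)}$ is not the truncation of a binomial edge ideal of any smaller graph and you have no independent depth bound for it --- you would need to control $\depth S/Q^{(i)}$ by exactly the kind of argument you are trying to run, so nothing reduces. The paper instead partitions the minimal primes by whether the cut set contains $W_r=F_r\cap F_{r+1}$: the primes with $W\not\supset W_r$ intersect to $J_{H_1}$ where $\Delta(H_1)=\langle F_1,\dots,F_{r-1},F_r\cup F_{r+1}\rangle$ (a closed graph on the \emph{same} vertex set with one fewer clique, so induction applies), and the primes with $W\supset W_r$ intersect to $J_{H_2}+(x_j,y_j: j\in W_r)$, which is a tensor product of two smaller sequentially Cohen--Macaulay pieces. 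Your alternative of deleting the clique $F_{r+1}$ produces a graph on a smaller vertex set with no clean ideal-theoretic relation to $J_G$, and the claim that ``each truncation $J_G^{\langle i\rangle}$ degenerates to the corresponding truncation for $G'$'' under a general hyperplane section is unsupported: the role of \cite{MVY} in the paper is much narrower, namely to show that the two specific linear forms gluing $K[V(H_4)]/J_{H_4}^{\langle\cdot\rangle}$ to $K[V(H_3)]/J_{H_3}$ form a regular sequence on the truncated tensor product (Lemma~\ref{L:regular}), which is what makes the depth count for the correction term $(J_{H_2}+(x_j,y_j:j\in W_r))^{\langle m\rangle}+J_{H_3}$ come out to exactly $m$. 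You correctly identify that the Cohen--Macaulayness of the correction term is the crux, but the decomposition you chose does not make that term computable.
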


This section is devoted to the proof of the above theorem. 

\subsection{Proof of the implication  (a) $\Rightarrow$ (b).} 
Let $G$ be an indecomposable closed graph on the vertex set $[n].$ 
Since $G$ is indecomposable, notice that $|W_i|\ge 2$ for all $1\le r \le r$.

Hence, to prove this
implication is reduced to prove the following proposition:

\begin{Proposition}\label{Th:nonSCM} Let $G$ be an indecomposable closed graph.
Suppose there exist two connected cut sets $W_1=[a,b], W_2=[c, d]$ of $G$  with $a<d$,  $|W_1\setminus W_2| \ge 2$,   $|W_2\setminus W_1| \ge 2$ and there does not exist  a connected cut set $W=[e,f]$ of $G$   in $[a, d]$ such that   $f-e \le \max ( b-a, d-c)$.
 Then $S/J_G$ is not sequentially Cohen-Macaulay.
\end{Proposition}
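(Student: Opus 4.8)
The plan is to combine Goodarzi's criterion (Proposition~\ref{Prop:Goo}) with a localization. Since the sequentially Cohen--Macaulay property is preserved under localization (this follows, for instance, from the compatibility of the modules $\Ext^{j}_S(S/J_G,S)$ with localization together with \cite{Sch}), it suffices to produce a single prime $\mathfrak{p}\supseteq J_G$ at which $(S/J_G)_{\mathfrak{p}}$ fails to be sequentially Cohen--Macaulay. I would take $\mathfrak{p}=\mathfrak{q}:=P_{W_1}(G)+P_{W_2}(G)$. One checks that, since connected cut sets are linearly ordered in the staircase, the hypotheses force $c<a\le d<b$, so that $W_1=[a,b]$ and $W_2=[c,d]$ overlap in $[a,d]$, with $|W_1\setminus W_2|=b-d\ge2$ and $|W_2\setminus W_1|=a-c\ge2$.

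The key geometric fact is that $\mathfrak{q}=(x_\ell,y_\ell:\ell\in[c,b])+J_{\widetilde{G\setminus[c,b]}}=P_{[c,b]}(G)$ is a Cohen--Macaulay prime of dimension $\dim S/\mathfrak{q}=n-(b-c)+1$ which does \emph{not} contain the generic prime $P_\emptyset(G)$ (the binomial $f_{k\ell}$ with $k<c$ and $\ell>b$ lies outside $\mathfrak{q}$). Hence localizing at $\mathfrak{q}$ discards the top-dimensional component $P_\emptyset(G)$ but keeps $P_{W_1}(G)$ and $P_{W_2}(G)$, whose localized dimensions are $\dim(S/P_{W_1}(G))_{\mathfrak{q}}=a-c=|W_2\setminus W_1|\ge2$ and $\dim(S/P_{W_2}(G))_{\mathfrak{q}}=b-d=|W_1\setminus W_2|\ge2$. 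A direct computation shows that every connected cut set of $G$ contained in $[c,b]$ is one of $W_2,\dots,W_1$ in the staircase order, that no union of two of them is again a cut set (they pairwise overlap in $[a,d]$), and that each \emph{intermediate} one has localized dimension $(b-c)-(b_m-a_m)$. The hypothesis that no connected cut set between $W_2$ and $W_1$ has length at most $\max(b-a,d-c)$ forces $b_m-a_m>\max(b-a,d-c)$, so all these intermediate localized dimensions are strictly smaller than $\min(a-c,b-d)$.

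Now I would apply Proposition~\ref{Prop:Goo} inside the local ring $(S/J_G)_{\mathfrak{q}}$ at the level $i=\min(a-c,b-d)-1$. By the previous step the corresponding truncation equals $(P_{W_1}(G)\cap P_{W_2}(G))_{\mathfrak{q}}$, so its spectrum is the union of two irreducible pieces of dimension $\ge2$ meeting exactly at the closed point $\mathfrak{q}(S/J_G)_{\mathfrak{q}}$; its punctured spectrum is therefore disconnected, and the Hartshorne--Grothendieck connectedness theorem gives depth at most $1$, strictly less than $i+1=\min(a-c,b-d)\ge2$. Thus Goodarzi's condition is violated at level $i$, so $(S/J_G)_{\mathfrak{q}}$ is not sequentially Cohen--Macaulay, and neither is $S/J_G$. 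The main obstacle is the bookkeeping of the minimal primes of the truncation after localization---showing that precisely $P_{W_1}(G)$ and $P_{W_2}(G)$ survive at level $i$ while all intermediate primes drop out---and it is exactly here that the no-short-cut-set hypothesis, together with the control of associated primes under hyperplane sections from \cite[Section~3]{MVY}, is needed; a secondary point is to justify the local form of Goodarzi's criterion (via \cite{Sch}) and the identity $\mathfrak{q}=P_{[c,b]}(G)$.
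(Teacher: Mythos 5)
Your overall strategy---localize at a suitable prime, identify the dimension filtration of the localized ring, and violate Goodarzi's criterion at a well-chosen level---is in the spirit of the paper's proof, but the details contain a genuine gap that makes the argument fail.

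The problem starts with your reading of the hypothesis. You deduce $c<a\le d<b$ by taking ``a connected cut set $W=[e,f]$ of $G$ in $[a,d]$'' literally (so that $W_1$ and $W_2$ themselves must not lie in $[a,d]$). But under that reading the set $[a,d]=W_1\cap W_2$ contains \emph{no} connected cut set whatsoever (any connected cut set starting at or after $a$ must end at or after $b>d$), so the ``no short cut set'' hypothesis becomes vacuous and the statement you are proving is: any two overlapping connected cut sets with both set differences of size $\ge 2$ force non-SCM. That statement is false: in the paper's Example 2 ($F_1=[1,3],\dots,F_5=[5,9]$, which is sequentially Cohen--Macaulay by Theorem~\ref{Th:SCMClosed}) the connected cut sets $[3,6]$ and $[5,8]$ overlap and satisfy your conditions. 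Your proof breaks precisely where you claim that every intermediate connected cut set $[a_m,b_m]$ has localized dimension strictly below $\min(a-c,b-d)$: to get $b_m-a_m>\max(b-a,d-c)$ you silently replace the hypothesis by ``no connected cut set \emph{between} $W_2$ and $W_1$ is short,'' which is not what your reading of the statement gives you. In the example above the intermediate cut set $[4,7]$ has localized dimension exactly $\min(a-c,b-d)=2$, so it survives in the truncation at level $i=1$; the truncation then has three components with consecutive ones meeting in dimension $1$, the punctured spectrum is connected, and the Hartshorne--Grothendieck disconnectedness argument collapses.

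The intended configuration (forced by the way Proposition~\ref{Th:nonSCM} is used in the corollary, and by the paper's proof, which normalizes to $d-b\ge c-a\ge 2$) is $a<c$ and $b<d$, with $W_1$ \emph{preceding} $W_2$, the two cut sets possibly disjoint, and the excluded short cut sets being the intermediate ones. Your proof does not cover this case: when $W_1$ and $W_2$ are disjoint, $P_{W_1}(G)+P_{W_2}(G)$ is in general not even prime, so the localization at $\mathfrak{q}$ and the identity $\mathfrak{q}=P_{[c,b]}(G)$ are unavailable. The paper instead localizes at the larger prime $P=(x_i,y_i:i\in[a,d])+P_\emptyset(G)$ (which retains the component $P_\emptyset(G)$ as well), shows the relevant truncation is $(P_\emptyset(G)\cap P_{W_1}(G)\cap P_{W_2}(G))_P$, and bounds its depth via a Mayer--Vietoris sequence and the explicit computation of $P_\emptyset(G)\cap P_{W_1}(G)+P_{W_2}(G)$ in Lemma~\ref{L:lemma1}, rather than via a connectedness argument. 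Your connectedness idea is attractive and does work in the special case of two \emph{consecutive} overlapping connected cut sets (where no intermediate primes exist), but as written it neither proves the statement under your own reading (which is false) nor under the intended one.
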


Before proving this proposition, we need some preparation.

Let $G_1$ be the indecomposable closed graph  such that $\Delta(G_1)=\langle[1, b], [a, n]\rangle$.
Then we see that   $J_{G_1}= P_{\emptyset}(G)\cap  P_{W_1}(G)$.
Let $G_2$ be the closed graph  with $V(G_2)=[1,n]\setminus [c,d]$ such that 
 $\Delta(G_2)=\langle[1,  c-1], [a, n] \setminus [c,d]\rangle.$ (Here we consider $[a, n] \setminus [c,d]$ as a clique of $G_2$.)

A crucial tool is given by the next lemma:

\begin{Lemma}\label{L:lemma1} With the above notation, we have
\[J_{G_1}+ P_{W_2}(G)=J_{G_2}+(x_j, y_j: j \in W_2).\]
\end{Lemma}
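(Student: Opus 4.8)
The plan is to prove the identity by writing down explicit generating sets for both ideals and then verifying the two inclusions generator by generator. First I would record the three ideals involved. Since $G_1$ has exactly the maximal cliques $[1,b]$ and $[a,n]$, its binomial edge ideal is
\[
J_{G_1} = (f_{ij} : \{i,j\}\subseteq[1,b]) + (f_{ij} : \{i,j\}\subseteq[a,n]).
\]
Removing the connected cut set $W_2=[c,d]$ disconnects $G$ into the two components with vertex sets $[1,c-1]$ and $[d+1,n]$, so by the description of $P_W(G)$ recalled in Section~\ref{S:Prelim} (where each component is replaced by a complete graph),
\[
P_{W_2}(G) = (x_j,y_j : j\in[c,d]) + (f_{ij} : \{i,j\}\subseteq[1,c-1]) + (f_{ij} : \{i,j\}\subseteq[d+1,n]).
\]
Finally $G_2$ has maximal cliques $[1,c-1]$ and $[a,c-1]\cup[d+1,n]$, so $J_{G_2}$ is generated by the $f_{ij}$ with $\{i,j\}\subseteq[1,c-1]$ together with the $f_{ij}$ with $\{i,j\}\subseteq[a,c-1]\cup[d+1,n]$; the latter contribute in particular the \emph{cross} binomials $f_{ij}$ with $i\in[a,c-1]$ and $j\in[d+1,n]$. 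I also record the interval inequalities that drive the argument: the construction of $G_1$, $G_2$ is set up with $a<c$ (so that $[a,c-1]$ is the common part of the two maximal cliques of $G_2$), and the hypothesis $|W_2\setminus W_1|\ge 2$ then forces $b<d$. Consequently $[1,b]\setminus[c,d]\subseteq[1,c-1]$ and $[a,n]\setminus[c,d]=[a,c-1]\cup[d+1,n]$, with $[a,c-1]\subseteq[1,c-1]$.

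For the inclusion $\supseteq$, I would check that each generator of the right-hand side lies in the left-hand side. The linear generators $x_j,y_j$ with $j\in[c,d]$ and the binomials supported on $[1,c-1]$ or on $[d+1,n]$ all already appear in $P_{W_2}(G)$. The only generators requiring an argument are the cross binomials $f_{ij}$ with $i\in[a,c-1]$, $j\in[d+1,n]$; but both indices lie in $[a,n]$, so each such $f_{ij}$ is a generator of $J_{G_1}$ and hence lies on the left. For the reverse inclusion $\subseteq$, I would show that each generator of $J_{G_1}+P_{W_2}(G)$ lies on the right. The generators coming from $P_{W_2}(G)$ are visibly present. For a generator $f_{ij}$ of $J_{G_1}$: if one of $i,j$ lies in $[c,d]$ then $f_{ij}\in(x_j,y_j:j\in[c,d])$; otherwise both indices avoid $[c,d]$, and then either $\{i,j\}\subseteq[1,c-1]$ (using $b<d$, so that a clique-$[1,b]$ edge off $[c,d]$ stays in $[1,c-1]$) or $\{i,j\}\subseteq[a,c-1]\cup[d+1,n]=[a,n]\setminus[c,d]$, and in either case $f_{ij}\in J_{G_2}$. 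Combining the two inclusions yields the claimed equality.

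The steps that need the most care are the correct identification of the connected components of $G\setminus W_2$ (and hence the exact generators of $P_{W_2}(G)$) and the bookkeeping of the interval positions $a<c$, $b<d$, which is what lets me decide, for each binomial, whether it vanishes modulo the linear forms or lands in one of the clique ideals. The genuinely substantive point, however, is the observation that the cross binomials of $G_2$ are exactly the large-clique binomials of $G_1$: this is the single coincidence that makes the two superficially different presentations define the same ideal, and I expect it to be the crux of the argument rather than the routine inclusions.
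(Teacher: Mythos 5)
Your proposal is correct and follows essentially the same route as the paper's proof: both arguments write down explicit generating sets for $J_{G_1}+P_{W_2}(G)$ and $J_{G_2}+(x_j,y_j: j\in W_2)$, absorb the binomials meeting $W_2=[c,d]$ into the linear forms, and then verify the two inclusions generator by generator, with the cross binomials on $[a,c-1]\cup[d+1,n]$ being exactly the $[a,n]$-clique binomials of $G_1$. Your version merely makes the interval bookkeeping ($a<c$, $b<d$) more explicit than the paper does.
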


\begin{proof}
The set of binomials
$
\{x_iy_j-x_jy_i : \  i,j \in [1, b] \ \mbox{\rm or  } i,j \in [a, n] \ \mbox{\rm or  }
  i,j \in [1,c-1] \ \mbox{\rm or  }  i,j \in [d+1, n]\}
\cup \{x_i ,y_i ; \  i\in W_2\}
$
is  a set of generators of $J_{G_1}+ P_{W_2}(G).$
Hence 
\begin{eqnarray*}
\{x_iy_j-x_jy_i : \  i,j  \in [a,n]\setminus W_2 \  \mbox{\rm or  }
  i,j \in [1,c-1] \}\cup \{x_i ,y_i ; \  i\in W_2\} 
\end{eqnarray*}
is   a set of generators of $J_{G_1}+ P_{W_2}(G).$
Each element of this  set belongs to $J_{G_2}+(x_j, y_j ; \ j \in W_2)$. Therefore, we get $J_{G_1}+ P_{W_2}(G)\subseteq J_{G_2}+(x_j, y_j ; j \in W_2).$

For the other inclusion, we observe that 
\begin{eqnarray*}
\{x_iy_j-x_jy_i ; \  i,j \in [1,c-1] \ \mbox{\rm or  } i,j \in [a, n] \setminus [c,d] \}
\cup \{x_i ,y_i; \   i\in W_2\}
\end{eqnarray*}
is  a set of generators of $J_{G_2}+(x_j, y_j ; \ j \in W_2).$
If   $i,j \in [1,c-1]$, then $x_iy_j-x_jy_i \in  P_{W_2}(G).$
If  $ i,j \in [a, n] \setminus [c,d] $, then  $x_iy_j-x_jy_i \in  J_{G_1}.$
Hence each of these  generators  belongs to $J_{G_1}+ P_{W_2}(G)$.
\end{proof}

Now we prove the above proposition:\newline
\vspace{.in}

\noindent\textit{Proof of Proposition  \ref{Th:nonSCM}.}
We may  assume $b-a+1=|W_1| \le |W_2|=d-c+1 .$
And with  $|W_1\setminus W_2| \ge 2$ we have
$d-b \ge c-a\ge 2$.

We localize $S/J_G$ at the prime ideal  $P=(x_i, y_i: i\in [a,d])+P_{\emptyset}(G)$.
Since $ \height P=2(d-a+1)+n-(d-a+1)+1=n+d-a+2$,
we have 
\begin{equation}\label{eq:dimSP}
\dim  S_P=n+d-a+2.
\end{equation}
By Proposition \ref{prop:cutsetsclosed}  in $S_P$ we have 
\[
(J_G)_P=(P_{\emptyset}(G))_P \cap (P_{W_1}(G))_P \cap (P_{W_2}(G))_P \cap  (P_{W_1 \cup W_2}(G))_P \cap  \cdots ,
\] 
 if $c \ge b+2$
and
\[
(J_G)_P=(P_{\emptyset}(G))_P \cap (P_{W_1}(G))_P \cap (P_{W_2}(G))_P \cap  \cdots ,
\] 
 if $c \le b+1$.
We have
\begin{eqnarray*}
\height P_{\emptyset}(G)& =&n-1, \\
\height P_{W_1}(G)&=&n+|W_1|-2,\\
\height P_{W_2}(G)&=&n+|W_2|-2.
\end{eqnarray*}

In addition, if $c \ge b+2$, we have 
\[\height P_{W_1 \cup W_2}(G)=n+|W_1|+|W_2|-3.\]

The intersection $ P_{\emptyset}(G)\cap  P_{W_1}(G)$ may be interpreted as a binomial edge ideal associated with an indecomposable closed graph with two maximal cliques that intersect in $W_1.$ Then by Proposition~\ref{Prop:2cliques}, it follows that 
$S/( P_{\emptyset}(G)\cap  P_{W_1}(G))$ is sequentially Cohen-Macaulay. On the other hand,  $S/P_{W_2}(G)$ is  Cohen-Macaulay.
By localizing at $P$ and using the above equalities together with (\ref{eq:dimSP}), we get
\begin{eqnarray*}
\depth (S/( P_{\emptyset}(G)\cap  P_{W_1}(G)))_P& =& \dim (S/( P_{W_1}(G)))_P=d-a-|W_1|+4=d-b+3, \\
\depth (S/P_{W_2}(G))_P &= &\dim (S/( P_{W_2}(G)))_P= d-a-|W_2|+4=c-a+3,\\
\dim (S/( P_{W_1 \cup W_2}(G)))_P&= &d-a-|W_1|-|W_2|+5=c-b+3 \  \mbox{\rm  if } c \ge b+2.
\end{eqnarray*}

In order to prove that $J_G$ is not sequentially Cohen-Macaulay, by \cite[Proposition~4.8]{Sch}, it is enough to show that  $(S/J_G)_P$  is not sequentially Cohen-Macaulay.

Set $k=c-a+2$.
We show that 
\[
\depth S_P/(J_G)_P^{<k>} \le k.
\] Then, by using Proposition~\ref{Prop:Goo}, we get the desired conclusion.

As $ \dim (S/( P_{W_1 \cup W_2}(G)))_P=  c-b+3\le c-a+2=k$ and 
by the assumption there does not exist  a cut set $W=[e,f]$ of $G$   in $[a, d]$ such that   $f-e \le \max ( b-a, d-c)$,
which means that  there does not exist  an  associated prime   $(P_W(G))_P$ of  $ (J_G)_P$  in $S_P$ such that 
\[ \dim (S/( P_{W}(G)))_P= d-a-(f-e+1)+4\ge d-a-(d-c+1)+3=c-a+3=k+1,\]
 we have 
\[
(J_G)_P^{<k>}=\cap _{I\in \Ass(J_G)_P, \  \dim S_P/I > k}I=(P_{\emptyset}(G))_P \cap (P_{W_1}(G))_P \cap (P_{W_2}(G))_P. 
\]

Considering the short exact sequence
\[
0\to S_P/(P_{\emptyset}(G)_P \cap P_{W_1}(G)_P \cap P_{W_2}(G)_P)\to  S_P/(P_{\emptyset}(G)_P \cap P_{W_1}(G)_P)\oplus  S_P/P_{W_2}(G)_P \to \] \[
\to  S_P/(P_{\emptyset}(G)_P \cap P_{W_1}(G)_P+P_{W_2}(G)_P)\to 0,
\]
by Depth Lemma, it is enough to show that  
\[
\depth S_P/(P_{\emptyset}(G)_P \cap P_{W_1}(G)_P+P_{W_2}(G)_P) \le k-1=c-a+1.
\]
By Lemma~\ref{L:lemma1}, it is enough to show that 
\[
\depth S_P/(J_{G_2}+(x_j, y_j : j \in W_2))_P\le c-a+1.
\]
But we have 
\begin{eqnarray*}
&&\depth S_P/(J_{G_2}+(x_j, y_j : j \in W_2))_P\\
&\le &\dim  S_P/(P_{[a,c-1]}(G_2)+(x_j, y_j : j \in W_2))_P\\
&=&\dim  S_P-2|W_2|-\mbox{\rm height } P_{[a, c-1]}(G_2)\\
&= &n+d-a+2-2|W_2|-\mbox{\rm height } P_{[a, c-1]}(G_2)\\
&= &n+d-a+2-2|W_2|-(n- |W_2|+|[a,c-1]|-2)\\
&= &d-a+2-(d-c+1)-(c-a)+2\\
&= & 3\le  c-a+1,
\end{eqnarray*}
since  $(P_{[a,c-1]}(G_2)+(x_j, y_j : j \in W_2))_P \ne S_P$ and $c-a \ge 2$.\qed

\vspace{.2in}

By the above proposition 
if $G$ is a closed set such that $S/J_G$   sequentially Cohen-Macaulay and  $W_1=[a_1, b_1], W_2=[a_2, b_2], W_3=[a_3, b_3]$    are   three connected cut sets  of $G$  with $a_1<a_2< a_3$,
then $|W_2| \le \max (|W_1|, |W_3| )$. 
Then we have the following corollary:

\begin{Corollary}
Let $G$ be an indecomposable closed graph with the clique complex $\Delta(G)=\langle F_1,F_2,\ldots,F_{r+1}\rangle, r\geq 2$
where $F_i=[a_{i-1},b_i]$ for $1\leq i\leq r+1$ and $1=a_0<a_1<\ldots<a_r,$ $b_1<\ldots<b_{r+1}=n.$
Set $W_i=F_i\cap F_{i+1}$  for $1\leq i\leq r.$  
 If $J_G$ is sequentially Cohen-Macaulay, then   there exists $1\leq k\leq r$ such that 
\begin{equation}\label{eq:cond}
|W_1|\geq |W_2|\geq \cdots \geq |W_k|\leq |W_{k+1}|\leq \cdots \leq |W_r|
\end{equation}
and   $a_i=a_r -(r-i)$ for $k \leq i \leq r$ and $b_j=b_1+(j-1)$ for $1\leq j \leq k.$
\end{Corollary}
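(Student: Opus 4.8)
\emph{Plan.} Throughout I write $W_i=F_i\cap F_{i+1}=[a_i,b_i]$, so that $|W_i|=b_i-a_i+1$, and I use that indecomposability forces $|W_i|\ge 2$ for every $i$. The proof has two ingredients, both extracted from sequential Cohen-Macaulayness: the \emph{valley shape} of the sequence $(|W_i|)_{1\le i\le r}$, which yields (\ref{eq:cond}), and a \emph{one-step rigidity} for consecutive connected cut sets, which upgrades the valley shape to the endpoint conditions of (b). The same index $k$, chosen as a position where $\min_i|W_i|$ is attained, will serve for both.

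First I would establish (\ref{eq:cond}). The size inequality noted just before the statement (a consequence of Proposition~\ref{Th:nonSCM}) asserts that for any three connected cut sets indexed $i<j<\ell$ one has $|W_j|\le\max(|W_i|,|W_\ell|)$; equivalently, $(|W_i|)$ has no interior strict local maximum. Taking $k$ to be the least index with $|W_k|=\min_i|W_i|$, I would check by a short three-term argument that $(|W_i|)$ is non-increasing on the indices $1,\dots,k$ and non-decreasing on $k,\dots,r$: if some $i<k$ had $|W_i|<|W_{i+1}|$, then either $i+1=k$ (contradicting minimality of $|W_k|$) or the triple $(i,i+1,k)$ gives $|W_{i+1}|\le\max(|W_i|,|W_k|)=|W_i|$, a contradiction; the non-decreasing part is symmetric, using the triple $(k,i,i+1)$. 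This is precisely (\ref{eq:cond}).

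Next I would prove the one-step rigidity: if $J_G$ is sequentially Cohen-Macaulay, then for every $1\le s\le r-1$ the cut sets overlap, $a_{s+1}\le b_s$, and moreover $a_{s+1}-a_s=1$ or $b_{s+1}-b_s=1$. Since $a_s<a_{s+1}$ and $b_s<b_{s+1}$, in the overlapping case $a_{s+1}\le b_s$ one computes $|W_s\setminus W_{s+1}|=a_{s+1}-a_s$ and $|W_{s+1}\setminus W_s|=b_{s+1}-b_s$, while in the disjoint or touching case $a_{s+1}>b_s$ these cardinalities equal $|W_s|$ and $|W_{s+1}|$, both $\ge 2$. Furthermore the connected cut sets contained in $[a_s,b_{s+1}]$ are exactly $W_s$ and $W_{s+1}$, so the clause of Proposition~\ref{Th:nonSCM} forbidding a further connected cut set of small size in that interval holds vacuously. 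Hence whenever both $|W_s\setminus W_{s+1}|\ge 2$ and $|W_{s+1}\setminus W_s|\ge 2$, Proposition~\ref{Th:nonSCM} applies to the pair $(W_s,W_{s+1})$ and shows $S/J_G$ is not sequentially Cohen-Macaulay. Contrapositively, sequential Cohen-Macaulayness rules out the disjoint/touching case and forces one of the two differences to equal $1$.

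Finally I would combine the two ingredients. For a step $i<k$, non-increasingness gives $b_{i+1}-b_i\le a_{i+1}-a_i$; together with rigidity and $b_{i+1}-b_i\ge 1$ this forces $b_{i+1}-b_i=1$, whence $b_j=b_1+(j-1)$ for $1\le j\le k$. For a step $k\le i\le r-1$, non-decreasingness reverses the inequality to $a_{i+1}-a_i\le b_{i+1}-b_i$, and the same argument forces $a_{i+1}-a_i=1$, whence $a_i=a_r-(r-i)$ for $k\le i\le r$. This is exactly condition (b) with the same $k$ as in (\ref{eq:cond}). The main obstacle is the bookkeeping needed to invoke Proposition~\ref{Th:nonSCM} on a consecutive pair: one must translate the difference-size hypotheses $|W_s\setminus W_{s+1}|\ge 2$ and $|W_{s+1}\setminus W_s|\ge 2$ correctly into endpoint conditions across the overlapping and disjoint/touching cases, and verify that no connected cut set lies strictly between two consecutive ones so that the non-existence clause is automatic. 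Once this is in place, the passage from the valley shape to (b) is immediate.
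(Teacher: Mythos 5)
Your proof is correct and follows the route the paper intends: the valley shape of $(|W_i|)$ comes from the three-cut-set inequality $|W_j|\le\max(|W_i|,|W_\ell|)$ stated just before the corollary, and the endpoint conditions come from applying Proposition~\ref{Th:nonSCM} to consecutive pairs $(W_s,W_{s+1})$, for which the non-existence clause is indeed vacuous since only $W_s$ and $W_{s+1}$ lie in $[a_s,b_{s+1}]$. The paper gives no written proof of the corollary, and your argument supplies exactly the two applications of the proposition (the valley plus the one-step rigidity $a_{s+1}-a_s=1$ or $b_{s+1}-b_s=1$) needed to close it, including correctly ruling out the disjoint case via $|W_i|\ge 2$ from indecomposability.
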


\subsection{Proof of the implication $(b)\Rightarrow (a)$.}
Let $G$ be an indecomposable closed graph $G$ such that  $\Delta(G)=\langle F_1, F_2, \dots, F_{r+1}\rangle$.
Set $W_i:= F_i\cap F_{i+1}.$ Since $G$ is indecomposable, we have $|W_i| \ge 2$.
We assume that 
\[
F_1=[1, b], F_2=[a_1, b+1], F_3=[a_2, b+2],\dots  , F_k=[a_{k-1}, b+(k-1)], F_{k+1}=[a, b_{k+1}], \]
\[F_{k+2}=[a+1, b_{k+2}], \dots ,  F_{r}=[a+(r-k-1), b_{r}],  F_{r+1}=[a+(r-k), n].\]
where we set $a_k=a$ and $b_1=b.$ This is equivalent to saying that the connected cut sets of $G$ are
\begin{eqnarray*}
W_1=[a_1, b], W_2=[a_2, b+1], W_3=[a_3, b+2], \dots , W_k=[a, b+(k-1)], \\
W_{k+1}=[a+1,b_{k+1}], W_{k+2}=[a+2, b_{k+2}],\dots , W_{r}=[a+(r-k), b_{r}].
\end{eqnarray*} 

Let $H_1$ be the indecomposable closed graph  such that 
\[
\Delta(H_1)=\langle F_1, F_2, \dots,  F_{r-1}, F_r\cup F_{r+1}\rangle.
\]
Then, by the proof of Case 2 in Proposition~\ref{prop:cutsetsclosed}, we have
 \[\bigcap_{W:\mbox{ cut set of }G,\ W\not\supset W_r}P_W(G)=J_{H_1}.\]
Set  $H_2=G\setminus W_r$ and 
set $\ell=\max\{s: W_s \cap W_{r}= \emptyset \}$.
Then  
\[\Delta(H_2)=\langle F_1, F_2 \dots,  F_{\ell}, F_{\ell+1}\setminus W_r, F_{r+1}\setminus W_r\rangle.\]
By the first case in the proof of Proposition~\ref{prop:cutsetsclosed}, we have
\[\bigcap_{W: \mbox{ cut set of } G,\ W \supset W_r}P_W(G) =J_{H_2}+(x_j, y_j : j \in W_r).\]

Then \[
J_G^{<m>}= J_{H_1}^{<m>}\cap (J_{H_2}+(x_j, y_j : j \in W_r))^{<m>}.
\]

We also set $H_3$ to be the restriction of $G$ with $V(H_3)=[\min(W_{r-1}), n] \setminus W_r$ such that 
\[
\Delta(H_3)=\langle F_r\cup F_{r+1}\setminus W_r\rangle.
\] Therefore, we have $J_{H_3}=(x_iy_j-x_jy_i :i,  j \in F_r \cup F_{r+1}\setminus W_r).$

In the rest of this section, we will often use the notation $K[V(H)]$ for the polynomial ring $K[\{x_i,y_i: i\in V(H)\}].$

\begin{Lemma}\label{L:Lemma2} We have
\[J_{H_1}^{<m>}+ (J_{H_2}+(x_j, y_j : j \in W_r))^{<m>}\] \[=(J_{H_2}+(x_j, y_j : j \in W_r))^{<m>}+J_{H_3}\]
\end{Lemma}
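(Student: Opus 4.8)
The plan is to write $B:=J_{H_2}+(x_j,y_j:j\in W_r)$ and to use that both sides of the claimed identity contain $B^{<m>}$; thus it suffices to prove the two inclusions $J_{H_3}\subseteq J_{H_1}^{<m>}+B^{<m>}$ and $J_{H_1}^{<m>}\subseteq J_{H_3}+B^{<m>}$. The first is immediate: every generator $x_iy_j-x_jy_i$ of $J_{H_3}$ has $i,j\in (F_r\cup F_{r+1})\setminus W_r\subseteq F_r\cup F_{r+1}$, and $F_r\cup F_{r+1}$ is a facet of $\Delta(H_1)$, so $\{i,j\}\in E(H_1)$ and the generator lies in $J_{H_1}$; since $J_{H_1}$ is radical, passing to $J_{H_1}^{<m>}$ only omits prime components, whence $J_{H_1}\subseteq J_{H_1}^{<m>}$ and $J_{H_3}\subseteq J_{H_1}^{<m>}$. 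All the difficulty is therefore concentrated in the inclusion $J_{H_1}^{<m>}\subseteq J_{H_3}+B^{<m>}$.

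The backbone of the argument is the untruncated identity $J_{H_1}+B=J_{H_3}+B$, which I would establish first. Since $I_0:=(x_j,y_j:j\in W_r)\subseteq B$, it is enough to compare the two ideals modulo $I_0$. Working in $S/I_0$, I would reduce each edge-binomial generator $x_uy_v-x_vy_u$ of $J_{H_1}$: if $u$ or $v$ lies in $W_r$ it dies; if $u,v\in F_r\cup F_{r+1}$ with $u,v\notin W_r$ it is a generator of $J_{H_3}$; and if $u,v$ lie in some $F_i$ with $i\le r-1$ and $u,v\notin W_r$, then $\{u,v\}$ is still an edge of the induced subgraph $H_2=G\setminus W_r$, so the generator lies in $J_{H_2}\subseteq B$. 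This shows $J_{H_1}\subseteq J_{H_3}+B$, and the reverse inclusion of sums is clear from $J_{H_3}\subseteq J_{H_1}$; hence $J_{H_1}+B=J_{H_3}+B$.

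Granting this, I would reduce the remaining inclusion to the single statement
\[
J_{H_1}^{<m>}\subseteq J_{H_1}+B^{<m>}.
\]
Indeed, if this holds then, using $B\subseteq B^{<m>}$ and the untruncated identity, $J_{H_1}^{<m>}+B^{<m>}\subseteq J_{H_1}+B^{<m>}=(J_{H_1}+B)+B^{<m>}=(J_{H_3}+B)+B^{<m>}=J_{H_3}+B^{<m>}$, which together with the first paragraph gives the Lemma. The displayed inclusion says precisely that the low-dimensional cut-set primes $P_W(G)$ (with $W$ a cut set of $H_1$, i.e.\ $W\not\supseteq W_r$, and $\dim S/P_W(G)\le m$) that are discarded in forming $J_{H_1}^{<m>}$ become redundant once $B^{<m>}$ is added.

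Proving the displayed inclusion is the main obstacle. By Proposition~\ref{prop:cutsetsclosed}, $J_{H_1}^{<m>}=\bigcap\{P_W(G):W\not\supseteq W_r,\ \dim S/P_W(G)>m\}$ and $B^{<m>}=\bigcap\{P_W(G):W\supseteq W_r,\ \dim S/P_W(G)>m\}$, so both truncations are governed by one numerical threshold. Discarding a component of $J_{H_1}$ amounts to merging consecutive cliques of $H_1$ and thereby creating new relations $x_uy_v-x_vy_u$, and I must show each lies in $J_{H_1}+B^{<m>}$. The plan is to exploit two absorption mechanisms: a relation involving a variable indexed in $W_r$ lies in $I_0=(x_j,y_j:j\in W_r)\subseteq B^{<m>}$, and, by the staircase hypothesis~(b) together with $|W_i|\ge 2$, the relations that cannot be treated this way are precisely those already produced by the parallel truncation of $J_{H_2}$, hence recorded in $B^{<m>}$; in the extreme case the common threshold discards every $P_W(G)$ with $W\supseteq W_r$ and $B^{<m>}$ is the unit ideal, so there is nothing to prove. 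Making these cases exhaustive and exact — tracking component by component how the threshold acts on both families of primes and how $\Ass$ behaves under the hyperplane sections $x_j,y_j$ ($j\in W_r$) — is the heart of the matter, and is where I expect to invoke \cite{MVY}.
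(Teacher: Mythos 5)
Your reductions are correct as far as they go: the inclusion $J_{H_3}\subseteq J_{H_1}\subseteq J_{H_1}^{<m>}$ is exactly the paper's ``$\supseteq$'' step, the untruncated identity $J_{H_1}+B=J_{H_3}+B$ (with $B=J_{H_2}+(x_j,y_j: j\in W_r)$) is a correct variant of Lemma~\ref{L:lemma1}, and the observation that the whole lemma follows from $J_{H_1}^{<m>}\subseteq J_{H_1}+B^{<m>}$ is a valid reformulation. But that last inclusion \emph{is} the lemma --- it is equivalent to it, not a genuine reduction of it --- and your final paragraph does not prove it; it only announces a plan. Moreover, the plan rests on an unjustified premise: you treat $J_{H_1}^{<m>}$ as if it were generated over $J_{H_1}$ by new edge-binomials $x_uy_v-x_vy_u$ obtained by ``merging consecutive cliques.'' No generating set of the dimension-filtration ideal $J_{H_1}^{<m>}$ is produced anywhere in your argument, and there is no a priori reason it is generated by quadratic binomials of that shape. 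Without such control, the claim that ``the relations that cannot be absorbed by $(x_j,y_j: j\in W_r)$ are precisely those already recorded in $B^{<m>}$'' is an assertion, not an argument; and the appeal to \cite{MVY} is misplaced here, since in the paper that reference is used for the regular-sequence Lemma~\ref{L:regular}, not for this one.

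The paper sidesteps the need for generators by arguing at the level of prime components. Each component $P_{W\cup W_r}(G)=P_W(H_2)+(x_j,y_j: j\in W_r)$ of $B^{<m>}$ gives rise to a component $P_W(H_1)$ of $J_{H_1}^{<m>}$ (the dimension can only increase in passing from $P_{W\cup W_r}(G)$ to $P_W(H_1)$), and $P_W(H_1)$ and $P_W(H_2)$ have the same trace on the subring $K[\{x_i,y_i: i\in[1,n]\setminus(F_r\cup F_{r+1})\}]$; hence the trace of $J_{H_1}^{<m>}$ on that subring lands inside $B^{<m>}$, while the part of $J_{H_1}^{<m>}$ supported on $F_r\cup F_{r+1}$ is supplied by $J_{H_3}$, which lies in every component $P_W(H_1)$. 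Some comparison of this kind --- matching the two families of primes and controlling the discrepancy inside $F_r\cup F_{r+1}$ --- is exactly the step your proposal leaves open, so as written the proof has a genuine gap at its central point.
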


\begin{proof}
($\subseteq$ )
 For any component  $P_W(H_2)$  of  $J_{H_2}^{<m>}$, where  \[J_{H_2}^{<m>}= \bigcap _{\dim K[V(H_2)]/P_W(H_2)> m}P_W(H_2),\]
$P_W(H_1)$  is a component of   $J_{H_1}^{<m>}$,
since $P_W(H_2)+(x_j, y_j : j \in W_r)=P_{W\cup W_r}(G)$ is a component of  $J_{G}^{<m>}$.
Since \[P_W(H_1) \cap K[\{x_i, y_i: i \in [1, n]\setminus F_r\cup F_{r+1}\}] =P_W(H_2)\cap K[\{x_i, y_i: i \in [1, n]\setminus F_r\cup F_{r+1}\}] ,\]
we have  \[J_{H_1}^{<m>}\cap K[\{x_i, y_i: i \in [1, n]\setminus F_r\cup F_{r+1}\}] \subseteq J_{H_2}^{<m>}\cap K[\{x_i, y_i: i \in [1, n]\setminus F_r\cup F_{r+1}\}].\]
Hence  \[J_{H_1}^{<m>} \subseteq (J_{H_2}+(x_j, y_j : j \in W_r))^{<m>}+(x_iy_j-x_jy_i :i,  j \in F_r \cup F_{r+1}\setminus W_r).\]

($\supseteq$ )
For any  $i,  j \in F_r \cup F_{r+1}\setminus W_r$ and  for any component  $P_W(H_1)$  of  $J_{H_1}^{<m>},$ we have
$ x_iy_j-x_jy_i \in P_W(H_1)$.
\end{proof}

\begin{Proposition} 
Let $G$ be an indecomposable closed graph with the clique complex $\Delta(G)=\langle F_1,F_2,\ldots,F_{r+1}\rangle, r\geq 2,$
where $F_i=[a_{i-1},b_i]$ for $1\leq i\leq r+1$ and $1=a_0<a_1<\ldots<a_r,$ $b_1<\ldots<b_{r+1}=n.$ Assume that there exists $1\leq k\leq r$ such that $a_i=a_r-(r-i)$ for $k \leq i \leq r$ and $b_j=b_1+(j-1)$ for $1\leq j \leq k.$ Then $S/J_G$ is a sequentially Cohen-Macaulay ideal.
\end{Proposition}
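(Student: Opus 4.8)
The plan is to verify Goodarzi's criterion (Proposition~\ref{Prop:Goo}) for $J_G$, namely that $\depth S/J_G^{<m>}\geq m+1$ for every $m$ in the range $0\leq m\leq \dim S/J_G-1$, and to carry this out by induction on the number $r+1$ of maximal cliques of $G$. The base case of two maximal cliques is exactly Proposition~\ref{Prop:2cliques}. For the inductive step I would use the decomposition $J_G^{<m>}=J_{H_1}^{<m>}\cap L^{<m>}$ already recorded above, where $L:=J_{H_2}+(x_j,y_j:j\in W_r)$, the graph $H_1$ merges the last two cliques $F_r,F_{r+1}$, and $H_2=G\setminus W_r$. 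The first thing to pin down is that both $H_1$ and $H_2$ again satisfy the numerical condition (b)—with the same, or an appropriately shifted, turning index $k$—while each of their indecomposable components has strictly fewer maximal cliques; together with Remark~\ref{R:disconectSCM}, which reduces $H_2$ (which splits off the isolated clique $F_{r+1}\setminus W_r$) to its connected indecomposable pieces, this lets the inductive hypothesis apply to $H_1$ and $H_2$.

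Granting this, I would write down the Mayer--Vietoris short exact sequence
\[
0\to S/J_G^{<m>}\to S/J_{H_1}^{<m>}\oplus S/L^{<m>}\to S/(J_{H_1}^{<m>}+L^{<m>})\to 0
\]
and apply the Depth Lemma. Since $H_1$ is sequentially Cohen-Macaulay by induction, Goodarzi's criterion gives $\depth S/J_{H_1}^{<m>}\geq m+1$; and because $(x_j,y_j:j\in W_r)$ involves only variables disjoint from those occurring in $J_{H_2}$, the quotient $S/L^{<m>}$ is, up to a dimension shift by $2|W_r|$, a truncation $J_{H_2}^{<m'>}$ of a sequentially Cohen-Macaulay ideal, so its depth is again at least $m+1$. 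Hence the middle module has depth $\geq m+1$, and by the Depth Lemma it remains only to establish that the cokernel on the right has depth at least $m$, whereupon $\depth S/J_G^{<m>}\geq\min\{m+1,m\}+1 = m+1$.

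The cokernel is where the real work lies, and it is the purpose of Lemma~\ref{L:Lemma2}, which rewrites $J_{H_1}^{<m>}+L^{<m>}$ as $L^{<m>}+J_{H_3}$, i.e.\ the $H_2$-truncation together with the coordinate hyperplanes for $W_r$, augmented by the clique binomials $J_{H_3}$ on $F_r\cup F_{r+1}\setminus W_r$. I expect this to be the main obstacle. The plan is to identify this quotient, after passing modulo $x_j=y_j=0$ for $j\in W_r$, with (a coordinate extension of) the binomial edge ideal of a closed graph with strictly fewer cliques—obtained from $H_2$ by gluing $F_r\setminus W_r$ and $F_{r+1}\setminus W_r$ into the single clique of $H_3$—and then to bound its depth below by $m$, either by reading off that it is Cohen-Macaulay of the correct dimension or by invoking the inductive hypothesis once more. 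The delicate point is the bookkeeping of dimensions: adjoining the $2|W_r|$ variables shifts the cutoff in the $^{<m>}$ truncation, so one must track precisely which associated primes $P_W(G)$ survive in each term, which is governed by Proposition~\ref{prop:cutsetsclosed} and the formula $\dim S/P_W(G)=n+c(W)-|W|$.

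Assembling these estimates, the Depth Lemma yields $\depth S/J_G^{<m>}\geq m+1$ for every admissible $m$, and Proposition~\ref{Prop:Goo} then gives that $S/J_G$ is sequentially Cohen-Macaulay. The two steps I would scrutinize most carefully are, first, the verification that condition (b) is inherited by $H_1$ and $H_2$ so that the induction is genuinely on graphs of the same type with fewer cliques, and second, the depth estimate for the cokernel in the previous paragraph, which is the only place where a nontrivial homological computation—rather than routine dimension bookkeeping—is required.
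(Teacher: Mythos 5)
Your proposal reproduces the architecture of the paper's proof: induction on the number of cliques, the splitting $J_G^{<m>}=J_{H_1}^{<m>}\cap L^{<m>}$ with $L=J_{H_2}+(x_j,y_j:j\in W_r)$, the Mayer--Vietoris sequence, the Depth Lemma, and Lemma~\ref{L:Lemma2} to rewrite the cokernel. The handling of the two outer terms is essentially the paper's (the paper phrases the bound for $S/L^{<m>}$ via the tensor decomposition $S/L\cong S/J_{H_4}\otimes_K S/J_H$ and \cite[Theorem 2.11]{STY}; note there is no dimension shift by $2|W_r|$, since those variables are killed rather than adjoined). So the route is the same, not a different one.

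The genuine gap is at the step you yourself flag as the main obstacle: the lower bound $\depth S/(L^{<m>}+J_{H_3})\geq m$. Neither of your two suggested resolutions works. The cokernel is not Cohen-Macaulay of the correct dimension: $L^{<m>}$ is an intersection of minimal primes of several different dimensions, so the quotient is at best sequentially Cohen-Macaulay. Nor is the cokernel the truncation of the binomial edge ideal of a smaller closed graph to which the inductive hypothesis could be applied directly: after killing $x_j,y_j$ for $j\in W_r$ it is $(J_{H_4})^{<m'>}+J_{H_3}$, i.e.\ a \emph{truncation} of one piece glued to a full clique along the single vertex $\min(W_{r-1})$. That gluing is realized as the tensor product $K[V(H_4)]/(J_{H_4})^{<m'>}\otimes_K K[V(H_3)]/J_{H_3}$ modulo the two linear forms $x-x'$, $y-y'$. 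The result of \cite{RaRi} guarantees these forms are a regular sequence on the \emph{untruncated} tensor product $K[V(H_4)]/J_{H_4}\otimes_K K[V(H_3)]/J_{H_3}$, but regularity is not automatically inherited by the truncation $(J_{H_4})^{<m'>}$, whose associated primes are a priori a different set. Establishing that the forms remain regular after truncation is exactly the content of the paper's Lemma~\ref{L:regular}, which requires comparing $\Ass$ of the truncated and untruncated quotients modulo $x-x'$ via \cite[Corollary 3.2]{MVY} together with the sequential Cohen-Macaulayness coming from the inductive hypothesis. Without this (or an equivalent) argument, the depth estimate for the cokernel, and hence the whole Depth Lemma step, is unproved.
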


\begin{proof}
We proceed  by induction on $r$. We may assume $ |W_1|   \le  |W_r|$.
By induction,  we can assume that  $S/J_{H_1}$ is sequentially Cohen-Macaulay.
Then we have
\[
\depth S/J_{H_1}^{<m>}\ge m +1, \mbox{ for } m\geq 0.
\]

Let $H_4$ be the restriction of $G$  with $V(H_4)=[1, \min(W_{r-1})].$ Then  
\[
\Delta(H_4)=\langle F_1, F_2 \dots,  F_{\ell}, F_{\ell+1}\setminus W_r\rangle.
\]
By the inductive hypothesis,  $S/J_{H_4}$ is sequentially Cohen-Macaulay since $H_4$ is closed.
Hence we have 
\[
\depth K[V(H_4)]/(J_{H_4})^{<m-(n-\min(W_{r-1})+1)>}\ge m-(n-\min(W_{r-1})+1)+1.
\]
On the other hand, if we consider $H$ to be the closed graph with 
$\Delta(H)=\langle F_{r+1}\setminus W_r \rangle,$ whose binomial edge ideal is Cohen-Macaulay, we get
\[
S/(J_{H_2}+(x_j, y_j ; j \in W_r))\cong S/J_{H_4}\otimes_K S/J_{H},
\] which implies by \cite[Theorem 2.11]{STY} that $S/(J_{H_2}+(x_j, y_j ; j \in W_r))$ is sequentially Cohen-Macaulay, and, consequently, 
\begin{equation}\label{eq:eq3}
\depth S/(J_{H_2}+(x_j, y_j : j \in W_r))^{<m>}\ge m +1.
\end{equation}

Obviously, $J_{H_3}$ is Cohen-Macaulay with \[\dim K[V(H_3)]/J_{H_3}=\depth K[V(H_3)]/J_{H_3}=n-\min(W_{r-1})+2.\]


 Then \[S/((J_{H_2}+(x_j, y_j : j \in W_r))^{<m>}+J_{H_3})\] can be considered as  
\[K[V(H_4)]/(J_{H_4})^{<m-(n-\min(W_{r-1})>} \otimes_{K} K[V(H_3)]/J_{H_3} \mbox{ modulo } \{x-x', y-y'\},\] 
where $x=x_{\min(W_{r-1})} \in K[V(H_4)]$,  $y=y_{\min(W_{r-1})}\in K[V(H_4)]$,  $x'=x_{\min(W_{r-1})} \in K[V(H_3)]$,
$y'=y_{\min(W_{r-1})} \in K[V(H_3)]$.

\begin{Lemma}\label{L:regular}
The sequence $\{x-x', y-y'\}$ is regular on the tensor product 
\[K[V(H_4)]/(J_{H_4})^{<m-(n-\min(W_{r-1})>} \otimes_{K} K[V(H_3)]/J_{H_3}.\]
\end{Lemma}

\begin{proof}
We note that $x-x'$ is  regular  
since   $\Ass K[V(H_4)]/(J_{H_4})^{<m-(n-\min(W_{r-1})>} \otimes_{K} K[V(H_3)]/J_{H_3}$ contains prime ideals of
the form \[P=P_W(H_4) + (x_iy_j-x_jy_i :i,  j \in F_r \cup F_{r+1}\setminus W_r)\]
and $x-x' \not\in P$.

We show that $y-y'$ is a regular element of \[(K[V(H_4)]/(J_{H_4})^{<m-(n-\min(W_{r-1})>} \otimes_{K} K[V(H_3)]/J_{H_3})/(x-x').\]
Since it is known that  $y-y'$ is a regular element of \[(K[V(H_4)]/(J_{H_4})\otimes_{K} K[V(H_3)]/J_{H_3})/(x-x'),\] by \cite{RaRi},
it is enough to show that 
\[
\Ass (K[V(H_4)]/(J_{H_4})^{<m-(n-\min(W_{r-1})>} \otimes_{K} K[V(H_3)]/J_{H_3})/(x-x')
\subseteq \] \[
\Ass (K[V(H_4)]/J_{H_4}\otimes_{K} K[V(H_3)]/J_{H_3})/(x-x').
\]
In fact, it is enough to show 
\begin{eqnarray*}
&&\Ass (K[V(H_4)]/J_{H_4} \otimes_{K} K[V(H_3)]/J_{H_3})/(x-x')=\\
&&\bigcup _{P' \in \Ass (K[V(H_4)]/J_{H_4} \otimes_{K} K[V(H_3)]/J_{H_3})}
 \Min (K[V(H_4)] \otimes_{K} K[V(H_3)]/(P'+(x-x')),
\end{eqnarray*}
and
\[
\Ass (K[V(H_4)]/(J_{H_4})^{<m-(n-\min(W_{r-1})>} \otimes_{K} K[V(H_3)]/J_{H_3})/(x-x')=\]
\[\bigcup _{P' }
\Min (K[V(H_4)] \otimes_{K} K[V(H_3)]/(P'+(x-x'))
\] where 
$P' \in \Ass (K[V(H_4)]/(J_{H_4})^{<m-(n-\min(W_{r-1})>} \otimes_{K} K[V(H_3)]/J_{H_3})$.

By \cite[Corollary 3.2]{MVY},  we have
\[ \Ass (K[V(H_4)]/(J_{H_4}))^{<t>} \otimes_{K} K[V(H_3)]/J_{H_3})/(x-x')\]
\[\setminus  \bigcup _{P' \in \Ass (K[V(H_4)]/(J_{H_4})^{<t>} \otimes_{K} K[V(H_3)]/J_{H_3})}
\Min (K[V(H_4)] )^{<t>} \otimes_{K} K[V(H_3)]/(P'+(x-x'))\]
\[=\{P : P\  \mbox{\rm is a homogeneous ideal of } \dim (K[V(H_4)] \otimes_{K} K[V(H_3)])/P =s \ \mbox{\rm  such that }\]
\[((J_{H_4})^{<s-(n-\min(W_{r-1})+2)>}/(J_{H_4})^{<t>} \otimes_{K}K[V(H_3)]/J_{H_3})_{P}=0, \]
\[ (K[V(H_4)]/(J_{H_4})^{<s+1-n-\min(W_{r-1})+2)>} \otimes_{K} K[V(H_3)]/J_{H_3}))_{P} \ne 0,\]
\[\depth (K[V(H_4)]/(J_{H_4})^{<s+1-n-\min(W_{r-1})+2)>} \otimes_{K} K[V(H_3)]/J_{H_3}))_{P}=1,\   x-x' \in P  \}
=\emptyset.\]

Indeed, if $ (K[V(H_4)]/(J_{H_4})^{<s+1-n-\min(W_{r-1})+2)>} \otimes_{K} K[V(H_3)]/J_{H_3}))_{P} \ne 0$, then
\[\depth (K[V(H_4)]/(J_{H_4})^{<s+1-n-\min(W_{r-1})+2)>} \otimes_{K} K[V(H_3)]/J_{H_3}))_{P}=\] \[\dim (K[V(H_4)]/(J_{H_4})\otimes_{K} K[V(H_3)])/Q)_{P} \ge 2\]
by the sequentially Cohen-Macaulay property of \[K[V(H_4)]/(J_{H_4})^{<s+1-n-\min(W_{r-1})+2)>} \otimes_{K} K[V(H_3)]/J_{H_3},\]
where $Q \in \Ass  K[V(H_4)]/(J_{H_4})^{<s+1-n-\min(W_{r-1})+2)>} \otimes_{K} K[V(H_3)]/J_{H_3}) $ satisfies
\[\height Q= \bigheight(J_{H_4}^{<s+1-n-\min(W_{r-1})+2)>} +J_{H_3}).\] Here, $\bigheight(I)$ denotes the maximal height of the minimal prime ideals of the ideal $I.$
Hence, we know that
\[
\Ass (K[V(H_4)]/(J_{H_4})^{<m-(n-\min(W_{r-1})>} \otimes_{K} K[V(H_3)]/J_{H_3})/(x-x')
\subseteq \] \[
\Ass (K[V(H_4)]/(J_{H_4})\otimes_{K} K[V(H_3)]/J_{H_3})/(x-x'),
\]
and   $\{x-x', y-y'\}$ is a regular sequence on $K[V(H_4)]/(J_{H_4})^{<m-(n-\min(W_{r-1})>} \otimes_{K} K[V(H_3)]/J_{H_3}$.
\end{proof}

By the above lemma, we get
\begin{eqnarray*}
&&\depth S/((J_{H_2}+(x_j, y_j : j \in W_r))^{<m>}+J_{H_3})\\
&\geq & m-(n-\min(W_{r-1}+1)+1+n-\min(W_{r-1})+2-2=m.
\end{eqnarray*}

By Lemma~\ref{L:Lemma2}, we have the short exact sequence
\begin{eqnarray*}
&&0\to S/J_G^{<m>} \to  S/ J_{H_1}^{<m>}\oplus S/ (J_{H_2}+(x_j, y_j : j \in W_r))^{<m>}\\
&&\to  S/((J_{H_2}+(x_j, y_j : j \in W_r))^{<m>}+J_{H_3}) \to 0.
\end{eqnarray*}
Hence, by using  Depth Lemma, the inductive hypothesis on $J_{H_1}$ and inequality (\ref{eq:eq3}), we get 
\[
\depth S/J_G^{<m>} \ge m+1, \mbox{ for }m\geq 0. 
\]
Therefore $S/J_G$ is sequentially Cohen-Macaulay by Proposition~\ref{Prop:Goo}.
\end{proof}

\begin{Examples}
\begin{itemize}
	\item [1.] Let $G$ be the closed graph on the vertex set $[7]$ with the maximal cliques
	\[
	F_1=[1,3], F_2=[2,5], F_3=[3,6], F_4=[5,7].
	\] Then $J_G$ is not sequentially Cohen-Macaulay. Indeed, one easily sees that the connected cut sets $W_1=[2,3]$ and $W_2=[3,5]$ satisfy the conditions of Proposition~\ref{Th:nonSCM}.
	\item [2.] Let $G$ be the closed graph on the vertex set $[9]$  with the maximal cliques
	\[
	F_1=[1,3], F_2=[2,6], F_3=[3,7], F_4=[4,8], F_5=[5,9].
	\] Then $J_G$ is sequentially Cohen-Macaulay by Theorem~\ref{Th:SCMClosed}.
\end{itemize}
\end{Examples}

\section{Approximately  Cohen-Macaulay binomial edge ideals of closed graphs}
\label{S:Approx}

Closely related to sequentially Cohen-Macaulay modules is the concept of  approximately Cohen-Macaulay modules, introduced by Goto in \cite{G}. 

\begin{Definition} 
Let $M$ be a finitely generated graded module over a standard graded
$K$-algebra $R.$  Then $M$ is called \textit{almost Cohen-Macaulay} if $\depth M\geq \dim M-1.$
\end{Definition}

\begin{Definition}\label{defn:approxCM}
Let $M$ be a finitely generated graded module over a standard graded
$K$-algebra $R.$  Let $N_1,\ldots, N_r$ be a reduced primary decomposition of $M$ where $N_j$ is a $P_j$--primary module
for all $j.$ The $R$--module $M$ is called 
 \textit{approximately Cohen-Macaulay} if  $M$ is almost Cohen-Macaulay and
$M/\cap_{\dim R/P_j=d} N_j$ is a Cohen-Macaulay module. 
\end{Definition}

By \cite[Proposition 4.5]{Sch}, $M$ is approximately Cohen-Macaulay if and only if it is sequentially and almost Cohen-Macaulay.

The trees with approximately Cohen-Macaulay binomial edge ideals were charac\-terized in \cite{Z}.  In this section, we give the classification of closed graphs whose binomial edge ideals are approximately Cohen-Macaulay.

\begin{Remark}\label{R:almostCM}
Let $G$ be a closed graph with connected components $G_1,G_2,\ldots,G_c$. Then, by \cite[Theorem~2.17]{STY}, $J_G$ 
is almost (approximately) Cohen-Macaulay if and only if there exists $1\leq i\leq c$ such that $J_{G_i}$ is almost (approximately) Cohen-Macaulay and $J_{G_j}$ is Cohen-Macaulay for all $j\neq i, 1\leq j\leq c.$
\end{Remark}

We begin with connected closed graphs whose binomial edge ideals are almost Cohen-Macaulay. 
By Remark~\ref{R:almostCM}, in order to characterize the graphs $G$ whose   binomial edge ideals are almost Cohen-Macaulay,   we may consider $G$ indecomposable. 

\begin{Theorem}\label{thm:almostCMclosed}
Let $G$ be an indecomposable closed graph such that $J_G$ is not Cohen-Macaulay. Then $J_G$ is almost Cohen-Macaulay if and only if $G$ satisfies one of the following conditions:
\begin{itemize}
	\item [\emph{(a)}] The clique complex $\Delta(G)$ has just two facets, which intersect in two vertices;
	\item [\emph{(b)}] The clique complex $\Delta(G)$ has just three facets, which are of the form $F_1=[1,b], F_2=[b-1,b+1], F_3=[b,n]$ for some 
	$3\leq b\leq n-2;$
	\item [\emph{(c)}] The clique complex $\Delta(G)$ has just four facets, which are of the form 	$F_1=[1,b], F_2=[b-1,b+1], F_3=[b,b+2], F_4=[b+1,n]$ for 
	some $3\leq b\leq n-3.$
\end{itemize}
\end{Theorem}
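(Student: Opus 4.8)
The plan is to turn the almost Cohen-Macaulay condition into a single depth inequality and then split the argument into a computational sufficiency part and a short necessity part. First I would record that for an indecomposable closed graph with at least two maximal cliques the prime $P_\emptyset(G)$ is the unique minimal prime of top dimension: one has $\dim S/P_\emptyset(G)=n+1$, while for a nonempty cut set $W=W_{j_1}\cup\cdots\cup W_{j_t}$ Proposition~\ref{prop:cutsetsclosed} gives $\dim S/P_W(G)=n+(t+1)-\sum_i|W_{j_i}|\le n+1-t\le n$, since $|W_{j_i}|\ge 2$ by indecomposability. Thus $\dim S/J_G=n+1$, the ideal $J_G$ is never unmixed (so never Cohen-Macaulay, matching the hypothesis, which for indecomposable graphs just excludes a single clique), and $J_G$ is almost Cohen-Macaulay exactly when $\depth S/J_G\ge n$. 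The whole statement becomes: $\depth S/J_G\ge n$ if and only if $G$ is one of (a), (b), (c).

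For the sufficiency I would treat the three families directly, each having a single shape (one, two, or three connected cut sets, all of size two, with consecutive cut sets sharing exactly one vertex). Case (a) is Proposition~\ref{Prop:2cliques}, whose proof already gives $\depth S/J_G=n+2-|W_1|=n$. For (b) and (c) I would use the short exact sequence attached to $J_G=A\cap P_{W_r}$, where $W_r$ is the last connected cut set and $A$ is the intersection of the remaining components: in (b) take $A=P_\emptyset(G)\cap P_{W_1}$, and in (c) take $A=P_\emptyset(G)\cap P_{W_1}\cap P_{W_2}$, which is the case-(b) ideal, so $\depth S/A=n$ by the previous step. The key move is to identify the sum term $A+P_{W_r}$ through Lemma~\ref{L:lemma1}: because consecutive cut sets overlap in a single vertex, merging the last two cliques and deleting $W_r$ leaves a one-vertex junction, so $S/(A+P_{W_r})$ is the binomial edge ideal of two complete graphs meeting in one vertex (modulo the variables indexed by $W_r$), hence Cohen-Macaulay of dimension $n-1$. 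The Depth Lemma then gives $\depth S/J_G\ge\min\{\depth S/A,\ \depth S/P_{W_r},\ \depth S/(A+P_{W_r})+1\}=n$, and since $J_G$ is not Cohen-Macaulay we conclude $\depth S/J_G=n$.

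For the necessity I argue by contraposition. Assume $G$ is indecomposable closed and not of type (a), (b), (c). If some $|W_i|\ge 3$, then $P_{W_i}$ is a minimal prime with $\dim S/P_{W_i}=n+2-|W_i|\le n-1$. If all $|W_i|=2$ but two of them satisfy $\max W_i+1<\min W_j$, then $W_i\cup W_j$ is a cut set and $P_{W_i\cup W_j}$ is a minimal prime with $\dim S/P_{W_i\cup W_j}=n+3-4=n-1$. In both situations $J_G$ has an associated prime of dimension $\le n-1$, so $\depth S/J_G\le n-1$. There remains the case where all $|W_i|=2$ and no such admissible union exists; since the numbers $\min W_i$ strictly increase, a direct check shows the only such graph that is not a tight staircase of length at most three (i.e. not one of (a), (b), (c)) is the exceptional three-clique shape $F_1=[1,p+1]$, $F_2=[p,p+3]$, $F_3=[p+2,n]$ with two disjoint, merely adjacent cut sets. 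Here the same short exact sequence applies with $A=P_\emptyset(G)\cap P_{W_1}$, but now deleting $W_2$ leaves the two merged cliques meeting in the two vertices $\{p,p+1\}$, so $S/(A+P_{W_2})$ is the non-Cohen-Macaulay binomial edge ideal of two cliques meeting in two vertices and has depth $n-2$; the Depth Lemma then forces $\depth S/J_G=n-1$, ruling out almost Cohen-Macaulayness.

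I expect the main obstacle to lie in organizing the necessity into exactly these cases: the two ``generic'' failures are easy because they exhibit a minimal prime of dimension $n-1$, but one must verify carefully that every indecomposable closed graph with all connected cut sets of size two and no admissible union is either a tight staircase (hence (a)--(c) after imposing the length bound) or precisely the single exceptional shape, and then dispatch that shape by the explicit depth computation. The accompanying technical point is the repeated use of Lemma~\ref{L:lemma1} to recognize each sum term as a binomial edge ideal of two cliques and to read off whether their intersection is one vertex (Cohen-Macaulay, depth $n-1$) or two vertices (non-Cohen-Macaulay, depth $n-2$); this single dichotomy is exactly what separates the almost Cohen-Macaulay graphs from the rest.
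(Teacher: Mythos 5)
Your proposal is correct and follows essentially the same route as the paper's proof: both directions rest on the bound $\depth S/J_G\le\dim S/P$ for associated primes $P$ (ruling out any $|W_i|\ge 3$ and any admissible union of two cut sets, which is how the paper caps the number of facets at four), and on the same short exact sequences with the Depth Lemma, including the explicit depth-$(n-1)$ computation that eliminates the exceptional three-facet shape $F_1=[1,b],F_2=[b-1,b+2],F_3=[b+1,n]$. The only difference is organizational (contraposition plus an enumeration of the ``no admissible union'' shapes versus the paper's direct case analysis on the number of facets), and your enumeration is accurate.
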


\begin{proof}
Let $J_G$ be almost Cohen-Macaulay. Then $\depth S/J_G\geq \dim S/J_G-1=n.$ Since $G$ is indecomposable, $J_G$ is not Cohen-Macaulay, thus 
$\depth S/J_G=n.$ Let $F_1=[a_1,b_1],F_2=[a_2,b_2],\ldots,F_r=[a_r,b_r]$ be the facets of $\Delta(G)$ with $1=a_1<a_2<\cdots <a_r<b_r=n.$
Since $\dim S/P_W(G)\geq \depth S/J_G$ for every cut-set $W,$ we get $n-|W|+c(W)\geq n,$ that is, $c(W)\geq |W|.$ In particular, this implies that every intersection $F_i\cap F_{i+1}$ has cardinality $2$ for all $1\leq i\leq r-1.$ If there exists $1\leq i\leq r-2$ such that $\max(F_i\cap F_{i+1})+1< \min(F_{i+1}\cap F_{i+2}),$  then 
$W:=(F_i\cap F_{i+1})\cup(F_{i+1}\cap F_{i+2})$ is a cut-set of $G$ with $|W|=4$ and $c(W)=3$. Therefore, $\dim S/P_G(W)=n-1,$ contradiction. Therefore, for all  $1\leq i\leq r-2,$ we have $\max(F_i\cap F_{i+1})+1\geq  \min(F_{i+1}\cap F_{i+2}).$

Let us first assume that the number of maximal cliques of $G$ is $\geq 5.$ In this case, we choose, for example, 
$W=(F_1\cap F_2)\cup (F_4\cap F_5).$ Then $\max(F_1\cap F_2)=b_1$ and $\min (F_4\cap F_5)=a_5.$ If  $a_5>b_1+1,$ then we get 
$|W|=4$ and $c(W)=3$ which yields a contradiction. Thus, we must have $a_5\leq b_1+1.$ But $a_5\geq a_2+3=b_1-1+3=b_1+2,$  again a contradiction. 
Therefore, $\Delta(G)$ has at most 4 facets. 

If $\Delta(G)$ has two facets, say $F_1=[1,b], F_2=[b-1,n],$ then $\depth S/J_G=n$ by the proof of Proposition~\ref{Prop:2cliques}.

Let $\Delta(G)$ with three facets, say $F_1=[1,b_1], F_2=[b_1-1, b_2],$ and $F_3=[b_2-1,n]$ for some  $3\leq b_1 < b_2\leq n-1.$ On the other hand, as we have seen above, we have $\max(F_1\cap F_{2})+1\geq  \min(F_{2}\cap F_{3})$ which implies that $b_1+1\geq b_2-1,$ or
$b_2\leq b_1+2.$ Hence, we have either $b_2=b_1+2$ or $b_2=b_1+1.$ 

If $b_2=b_1+2,$ we set $b_1=b$ and get 
$F_1=[1,b], F_2=[b-1,b+2], F_3=[b+1,n].$ It is easily seen that $J_G=P_\emptyset(G)\cap P_{\{b-1,b\}}(G)\cap P_{\{b+1,b+2\}}(G)$ where the last two prime ideals have dimension $n.$ We may write
\[
J_G=J_{G_1}\cap P_{\{b+1,b+2\}}(G)
\]
where $G_1$ is the closed graph with the maximal cliques $[1,b],[b-1,n].$ From above, we know that $\depth S/J_{G_1}=n.$
Let us consider the exact sequence 
\begin{equation}\label{eq:2nd}
0\to \frac{S}{J_G} \to \frac{S}{J_{G_1}}\oplus \frac{S}{P_{\{b+1,b+2\}}(G)}\to \frac{S}{P_{G_1}+P_{\{b+1,b+2\}}(G))}\to 0.
\end{equation}
We observe that $P_{G_1}+P_{\{b+1,b+2\}}(G)=(x_{b+1},y_{b+1},x_{b+2},y_{b+2})+ J_L$ where $J_L$ is the binomial edge ideal of a closed graph on $n-2$ vertices  with the maximal cliques $[1,b]$ and $[b-1,n]\setminus\{b+1,b+2\}.$ As the maximal cliques intersect in two vertices, we derive that 
$\depth S/(P_{G_1}+P_{\{b+1,b+2\}}(G)))=n-2.$ By Depth Lemma applied on sequence (\ref{eq:2nd}), we obtain 
$\depth S/J_G=n-1$, a contradiction to our assumption on $J_G.$ Thus, we only have to analyze the case $b_2=b_1+1.$ We set $b_1=b.$ Then the maximal cliques of $G$ are $F_1=[1,b], F_2=[b-1,b+1], F_3=[b,n]$ and $J_G=P_\emptyset(G)\cap P_{\{b-1,b\}}(G)\cap 
P_{\{b,b+1\}}(G)$ where the last two prime ideals have dimension $n.$
Again, we may write 
\[
J_G=J_{G_1}\cap P_{\{b,b+1\}}(G)
\]
where $G_1$ is the closed graph with the maximal cliques $[1,b],[b-1,n]$ and consider the exact sequence 
\begin{equation}\label{eq:3rd}
0\to \frac{S}{J_G} \to \frac{S}{J_{G_1}}\oplus \frac{S}{P_{\{b,b+1\}}(G)}\to \frac{S}{P_{G_1}+P_{\{b,b+1\}}(G)}\to 0.
\end{equation}
But in this case, $P_{G_1}+P_{\{b,b+1\}}(G)=(x_by_b,x_{b+1},y_{b+1})+J_N$ where $J_N$ is the binomial edge ideal of the closed graph on 
$n-2$ vertices with the maximal cliques $[1,b-1]$ and $[b-1,n]\setminus\{b,b+1\}.$ The ideal $J_N$ is Cohen-Macaulay since the two cliques intersect in just one vertex. Therefore, $\depth S/(P_{G_1}+P_{\{b,b+1\}}(G))=n-1.$ With Depth Lemma on 
sequence (\ref{eq:3rd}), we get $\depth S/J_G=n.$ 

Finally, for the case when $G$ has $4$ maximal cliques, we may proceed in a simi\-lar way to derive that the maximal cliques of $G$ are 
\[F_1=[1,b], F_2=[b-1,b+1],  F_3=[b,b+2],\mbox { and } F_4=[b+1,n].\] Then the minimal prime ideals of $J_G$ are 
$P_{\emptyset}(G), P_{\{b-1,b\}}(G),P_{\{b,b+1\}}(G),P_{\{b+1,b+2\}}(G),$ the last three being of dimension $n.$ We may write 
\[
J_G=J_{G_2}\cap P_{\{b+1,b+2\}}(G)
\]
where $G_2$ is the closed graph with the maximal cliques $[1,b],[b-1,b+1],[b,n].$ From our previous discussion, we derive that 
$\depth S/J_{G_2}=n$  and next consider the following exact sequence
\begin{equation}\label{eq:4th}
0\to \frac{S}{J_G} \to \frac{S}{J_{G_2}}\oplus \frac{S}{P_{\{b+1,b+2\}}(G)}\to \frac{S}{P_{G_2}+P_{\{b+1,b+2\}}(G)}\to 0.
\end{equation}
in order to get $\depth S/J_G\geq n.$
 But we also have $\depth S/J_G\leq n.$ Therefore, we get equality. 
\end{proof}

The following examples illustrate the case (b) in the above theorem.

\begin{Examples}
\begin{itemize}
	\item [1.] Let $G$ be the closed graph on the vertex set $[7]$  with the maximal cliques $[1,4], [3,6],[5,7].$ Then $J_G$ is not almost Cohen-Macaulay since it does not satisfy condition (b) of Theorem~\ref{thm:almostCMclosed}. Note that according to Theorem~\ref{Th:SCMClosed}, $J_G$ is not sequentially Cohen-Macaulay as well.
	\item [2.] Let $G$ be the closed graph on the vertex set $[7]$  with the maximal cliques $[1,4], [3,5],[4,7].$ Then $G$ satisfies condition (b) of 
	Theorem~\ref{thm:almostCMclosed}, thus $J_G$ is almost Cohen-Macaulay. The graph $G$  satisfies also the condition (b) in Theorem~\ref{Th:SCMClosed}, therefore, 
	$J_G$ is sequentially Cohen-Macaulay.
\end{itemize}
\end{Examples}

Let $G$ be a connected closed graph and let $G=G_1\cup G_2\cup\cdots\cup G_r$ be the decomposition of $G$ into indecomposable subgraphs. 
By Remark~\ref{R:decompose}, every component of $G$ whose binomial edge ideal is Cohen-Macaulay is a maximal clique of $G.$

\begin{Corollary}\label{cor:almostCMclosed}
Let $G$ be a connected closed graph such that $J_G$ is not Cohen-Macaulay. Then $J_G$ is almost Cohen-Macaulay if and only if there exists a unique 
$1\leq i\leq r$ such that $G_i$ is of one of the forms from Theorem~\ref{thm:almostCMclosed} and $G_j$ is a maximal clique of $G$ for all $j\neq i.$ 
\end{Corollary}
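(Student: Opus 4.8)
The plan is to reduce the statement for a connected decomposable closed graph $G$ to the indecomposable case already settled in Theorem~\ref{thm:almostCMclosed}, by exploiting the behaviour of the almost Cohen-Macaulay property under the decomposition $G=G_1\cup\cdots\cup G_r$. First I would invoke Remark~\ref{R:almostCM}, which applies verbatim since a connected closed graph is in particular a (connected) closed graph with one connected component; however the relevant reduction here is not along connected components but along the indecomposable pieces $G_1,\dots,G_r$. The key observation is that by iterating the reduction in Remark~\ref{R:disconectSCM}, which passes from a decomposable graph to a disjoint union by the regular-sequence argument of \cite{RaRi}, and then applying \cite[Theorem~2.17]{STY} as recorded in Remark~\ref{R:almostCM}, the almost Cohen-Macaulay property of $J_G$ is governed componentwise exactly as stated: $J_G$ is almost Cohen-Macaulay if and only if exactly one of the $J_{G_i}$ is almost Cohen-Macaulay (but not Cohen-Macaulay) while all the remaining $J_{G_j}$ are Cohen-Macaulay.

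The second step is to identify which indecomposable pieces can have Cohen-Macaulay binomial edge ideal. By Remark~\ref{R:decompose}, for a connected closed graph the only indecomposable components $G_j$ with $J_{G_j}$ Cohen-Macaulay are the maximal cliques of $G$; indeed, an indecomposable closed graph with at least two maximal cliques has a connected cut set of cardinality at least two, hence fails unmixedness and cannot be Cohen-Macaulay. Thus ``$G_j$ is Cohen-Macaulay'' is equivalent to ``$G_j$ is a maximal clique of $G$'', and the componentwise criterion rephrases as: there is a unique index $i$ with $J_{G_i}$ almost but not Cohen-Macaulay, and every other $G_j$ is a maximal clique. Finally I would feed Theorem~\ref{thm:almostCMclosed} into this: since each $G_i$ is an indecomposable closed graph, its binomial edge ideal is almost but not Cohen-Macaulay precisely when $G_i$ has one of the three shapes (a), (b), (c) listed there. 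Substituting this characterization into the componentwise statement yields exactly the claim of the corollary.

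For the converse direction I would simply reverse the argument: if some $G_i$ has one of the three forms and every other $G_j$ is a maximal clique, then $J_{G_i}$ is almost Cohen-Macaulay by Theorem~\ref{thm:almostCMclosed} and each $J_{G_j}$ is Cohen-Macaulay by Remark~\ref{R:decompose}, so Remark~\ref{R:almostCM} (via the STY reduction) gives that $J_G$ is almost Cohen-Macaulay; moreover $J_G$ is not Cohen-Macaulay since $G_i$ is not, consistent with the hypothesis. The main obstacle I anticipate is purely bookkeeping: one must make sure that the reduction of Remark~\ref{R:disconectSCM}, stated there for the sequentially Cohen-Macaulay property, transports correctly to the almost Cohen-Macaulay property through the regular sequence $\ell_x,\ell_y$, i.e. that $\depth$ and $\dim$ both drop by exactly the length of the regular sequence so that the inequality $\depth\geq\dim-1$ is preserved in both directions. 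This is where \cite[Theorem~2.17]{STY} and the regular-sequence property from \cite{RaRi} must be combined carefully, but no new computation beyond tracking these two invariants is required.
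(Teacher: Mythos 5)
Your proposal is correct and follows essentially the same route the paper intends (the corollary is stated without a displayed proof, relying on the reduction along the decomposition into indecomposable pieces via the regular sequence $\ell_x,\ell_y$ of \cite{RaRi} together with \cite[Theorem~2.17]{STY}, the observation from Remark~\ref{R:decompose} that the Cohen--Macaulay indecomposable pieces are exactly the maximal cliques, and Theorem~\ref{thm:almostCMclosed}). Your explicit attention to how $\depth$ and $\dim$ both drop by the length of the regular sequence, so that almost Cohen--Macaulayness transfers in both directions, is exactly the bookkeeping the paper leaves implicit.
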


\begin{Theorem}\label{thm:approxCMclosed}
Let $G$ be a closed graph. Then $J_G$ is approximately Cohen-Macaulay if and only if $J_G$ is almost Cohen-Macaulay.
\end{Theorem}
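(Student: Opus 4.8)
The plan is to build the proof on the identity, recorded through \cite[Proposition~4.5]{Sch}, that a module is approximately Cohen-Macaulay precisely when it is both sequentially and almost Cohen-Macaulay. One direction of the stated equivalence is then immediate: if $J_G$ is approximately Cohen-Macaulay, it is in particular almost Cohen-Macaulay. So the whole content of the theorem lies in the converse, and after the reduction just mentioned the converse amounts to a single assertion: \emph{for a closed graph, almost Cohen-Macaulayness already forces sequential Cohen-Macaulayness.} Establishing that is the goal, and then combining it with the hypothesis (almost Cohen-Macaulay) with \cite[Proposition~4.5]{Sch} closes the proof.

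To prove that assertion I would first reduce to the connected case. If $G$ has connected components $G_1,\ldots,G_c$, then by Remark~\ref{R:almostCM} the ideal $J_G$ is almost (respectively approximately) Cohen-Macaulay exactly when one component enjoys that property and all the others are Cohen-Macaulay. Since a Cohen-Macaulay ideal is trivially approximately Cohen-Macaulay, the disconnected case follows once the connected case is settled, applied to the unique non-Cohen-Macaulay component. I may therefore assume $G$ connected; and if $J_G$ happens to be Cohen-Macaulay there is nothing to prove, so I assume $J_G$ is almost but not Cohen-Macaulay.

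The heart of the argument is then to show such a $G$ has a sequentially Cohen-Macaulay binomial edge ideal. I would write the decomposition $G=G_1\cup\cdots\cup G_r$ into indecomposable subgraphs and apply Corollary~\ref{cor:almostCMclosed}: exactly one $G_i$ is of one of the three forms (a), (b), (c) of Theorem~\ref{thm:almostCMclosed}, while every other $G_j$ is a maximal clique and so has a Cohen-Macaulay, hence sequentially Cohen-Macaulay, binomial edge ideal. By Remark~\ref{R:disconectSCM} sequential Cohen-Macaulayness of $J_G$ is equivalent to that of each $J_{G_j}$, so the problem collapses to checking that the three forms (a), (b), (c) are themselves sequentially Cohen-Macaulay. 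Form (a) is an indecomposable closed graph with two maximal cliques, so Proposition~\ref{Prop:2cliques} applies directly. For forms (b) and (c) I would read off the parameters from the explicit facet lists $F_i=[a_{i-1},b_i]$ and verify condition (b) of Theorem~\ref{Th:SCMClosed}: a short computation shows $a_i=a_r-(r-i)$ holds for all $i$ and $b_j=b_1+(j-1)$ holds for all relevant $j$, so any $1\le k\le r$ witnesses the numerical condition and Theorem~\ref{Th:SCMClosed} yields sequential Cohen-Macaulayness.

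There is no deep technical obstacle here; the proof is an assembly of results already in hand, and the genuinely clarifying step is the conceptual one of recognizing that the classification of almost Cohen-Macaulay closed graphs in Theorem~\ref{thm:almostCMclosed} lands entirely inside the sequentially Cohen-Macaulay regime. The one point deserving care --- the closest thing to an obstacle --- is ensuring the two independent reductions interact correctly: the connected-components reduction of Remark~\ref{R:almostCM} (which governs both the almost and the approximately Cohen-Macaulay properties in parallel) and the indecomposable-decomposition reduction of Remark~\ref{R:disconectSCM} (which governs the sequential property). Once one checks that the single almost-but-not-Cohen-Macaulay indecomposable piece singled out by Corollary~\ref{cor:almostCMclosed} is exactly the piece whose sequential Cohen-Macaulayness must be verified, the two reductions line up and the argument concludes.
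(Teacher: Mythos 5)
Your proposal is correct and follows essentially the same route as the paper: reduce to the connected case, invoke Schenzel's characterization of approximately Cohen-Macaulay as sequentially plus almost Cohen-Macaulay, and then use Corollary~\ref{cor:almostCMclosed} to reduce to checking that the three graph forms of Theorem~\ref{thm:almostCMclosed} are sequentially Cohen-Macaulay via Theorem~\ref{Th:SCMClosed}. You are in fact slightly more careful than the paper in handling form (a) separately through Proposition~\ref{Prop:2cliques}, since Theorem~\ref{Th:SCMClosed} as stated requires at least three maximal cliques.
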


\begin{proof}
As we have already discussed, we may consider $G$ a connected closed graph. The direct implication follows by 
\cite[Proposition 4.5]{Sch}. For the converse, let $G=G_1\cup G_2\cup\cdots\cup G_r$ be the decomposition of $G$ into indecomposable subgraphs. Since $J_G$ is almost Cohen-Macaulay, by Corollary~\ref{cor:almostCMclosed}, one of the components has an  almost Cohen-Macaulay binomial edge ideal  and all the others are maximal cliques of $G.$ Therefore, by \cite[Proposition~4.5]{Sch} it is enough to show that every graph $G$ of one of the forms from Theorem~\ref{thm:almostCMclosed} is sequentially Cohen-Macaulay. 
 But this is clearly true, since in all cases, the graph satisfies condition (b) from Theorem~\ref{Th:SCMClosed}.
\end{proof}

\section*{acknowledgment}
We thank an anonymous reviewer for the careful reading of our manuscript and the  insightful comments and suggestions.

\subsection*{Data availability statement} Our manuscript has no associated data.

\end{document}